\def\timenow{%
  \@tempcnta=\time \divide\@tempcnta by 60 \number\@tempcnta:\multiply
  \@tempcnta by 60 \@tempcntb=\time \advance\@tempcntb by -\@tempcnta
  \ifnum\@tempcntb <10 0\number\@tempcntb\else\number\@tempcntb\fi}
\def\TODAY{\number\year-\ifcase\month\or 01\or 02\or 03\or 04\or 05\or
    06\or 07\or 08\or 09\or 10\or 11\or 12\fi-\number\day}
\theoremstyle{plain}
\newtheorem{theorem}{Theorem}[section]
\newtheorem{lemma}[theorem]{Lemma}
\newtheorem{proposition}[theorem]{Proposition}
\theoremstyle{definition}
\newtheorem{definition}[theorem]{Definition}
\newtheorem{remark}[theorem]{Remark}
\newtheorem{example}[theorem]{Example}
\numberwithin{equation}{section}
\newcommand{\baseRing}[1]{\ensuremath{\mathbb{#1}}}
\newcommand{\Z}{\baseRing{Z}}
\newcommand{\R}{\baseRing{R}}
\newcommand{\C}{\baseRing{C}}
\newcommand{\N}{\baseRing{N}}
\newcommand{\Q}{\baseRing{Q}}
\def\pd#1{ \partial_{#1} }
\newcommand{\LL}{{\mathcal L}}
\newcommand{\UU}{{\mathcal U}}
\newcommand{\NN}{{\mathcal N}}
\newcommand{\CC}{{\mathcal C}}
\newcommand{\EE}{{\mathcal E}}
\DeclareMathOperator{\GL}{GL}
\def\cprime{$'$}
\begin{document}
\title[Bivariate rational hyper\-geo\-metric functions]{The structure of
bivariate rational hyper\-geo\-metric functions}

\author[E.~Cattani]{Eduardo Cattani}
\address[EC]{Department of Mathematics and Statistics \\
University of Massachusetts at Amherst \\
Amherst, MA 01003, USA}
\email{cattani@math.umass.edu}

\author[A.~Dickenstein]{Alicia Dickenstein}
\address[AD]{Departamento de Matem\'atica\\
FCEN, Universidad de Buenos Aires \\
(1428) Bue\-nos Aires, Argentina.}
\email{alidick@dm.uba.ar}

\author[F.~Rodr\'{\i}guez Villegas]{Fernando Rodr\'{\i}guez Villegas}
\address[FRV]{Department of Mathematics \\
University of Texas at Austin\\
Austin, TX 78712-1082, USA}
\email{villegas@math.utexas.edu}

\begin{abstract}
  We describe the structure of all codimension-two lattice
  configurations $A$ which admit a stable rational
  $A$-hyper\-geo\-me\-tric function, that is a rational function $F$
  all whose partial derivatives are non zero, and which is a solution
  of the $A$-hyper\-geo\-metric system of partial differential
  equations defined by Gel'fand, Kapranov and Zelevinsky.  We show,
  moreover, that all stable rational $A$-hyper\-geo\-me\-tric
  functions may be described by toric residues and apply our results
  to study the rationality of bivariate series whose coefficients are
  quotients of factorials of linear forms.
\end{abstract}
\maketitle
\section{Introduction}

Let $A = \{a_1,\dots,a_{n}\} \subset \Z^d$, be a configuration of
lattice points spanning $\Z^d$.  We also denote by $A$ the ${d \times
  n}$ integer matrix with columns $a_1, \dots, a_{n}$.  We say that
the configuration $A$ is {\em regular} if the points of $A$ lie in a
hyperplane off the origin.  The {\it dimension} of $A$ is defined as
the dimension of the affine span of its columns and the {\it
  codimension} as the rank of the lattice
\begin{equation}\label{lattice}
 M :=  \ \{v\in \Z^{n} : A\cdot v =   0\}.
\end{equation}

Following Gel\cprime fand, Kapranov and Zelevinsky \cite{gkz89,gkz90}
we associate to $A$ and a {\em parameter}  vector $\beta \in \C^d$ a left ideal in the
Weyl algebra in $n$ variables $D_n:=\C\langle z_1,\dots,z_n,\pd
1,\dots,\pd n\rangle$ as follows.

\begin{definition}\label{def:hypergeom}
 Given $A\in\Z^{d\times n}$ of rank $d$ and a vector $\beta \in
 \C^d$, the {\it $A$-hyper\-geo\-metric system} with parameter $\beta$ is
 the left ideal $H_A(\beta)$ in the Weyl algebra $ D_n$ generated by
 the {\sl toric operators} $\ \partial^u - \partial^v$, for all $
 u,v\in \N^n$ such that $u - v \in M$, and the {\sl Euler
  operators} $ \,\sum_{j= 1}^n a_{ij} z_j \partial_j - \beta_i \,$
 for $\, i= 1,\dots,d $.
 A holomorphic function $F(z_1,\dots,z_n)$, defined in some open set
 $U\subset \C^n$, is said to be {\it $A$-hyper\-geo\-metric of degree
  $\beta$} if it is annihilated by $H_A(\beta)$.
\end{definition}

$A$-hyper\-geo\-metric systems include as special cases the homogeneous
versions of classical hyper\-geo\-metric systems
in $n-d$ variables.
The ideal $H_A(\beta)$ is always holonomic and if $A$ is regular it
has regular singularities.  The singular locus of the hyper\-geo\-metric
$D_n$-module $D_n/H_A(\beta)$ equals the zero locus of the principal
$A$-determinant $E_A$,  whose irreducible factors are  the sparse discriminants
$D_{A'}$ corresponding to the facial subsets $A'$ of $A$ \cite{gkz89,gkzbook}.

Often, the existence of special solutions to a system of equations
imposes additional structure on the data (see for example a recent
preprint \cite{Beukers} of Beukers on algebraic A-hyper\-geo\-metric
functions.)  In this paper we are interested in the constraints
imposed on $A$ by the existence of rational $A$-hyper\-geo\-metric
functions.  All $A$-hyper\-geo\-metric systems admit polynomial
solutions for parameters $\beta$ in $\N A$, which are closely related
to the solutions of an integer programming problem associated to the
data $(A,\beta)$ \cite{sst_compo}. Likewise, for every $A$ there exist
Laurent polynomial solutions to the $A$-hyper\-geo\-metric
system. Clearly, these rational solutions are annihilated by a
sufficiently high partial derivative. The goal of this paper is to
characterize all codimension-two lattice configurations $A$ which
admit a rational $A$-hyper\-geo\-metric function none of whose derivatives
vanishes.  Such rational functions are called {\em stable}.

We will assume that $A$ is not a {\em pyramid}; that is, a configuration
all of whose points, except one, are contained in a hyperplane.
This entails no loss of generality.
Indeed, suppose  the subset
$A' = \{a_1,\dots,a_{n-1}\}$ lies in a hyperplane not containing $a_n$,
then all $A$-hyper\-geo\-metric functions are of the form:
\begin{equation}\label{pyramid}
F(z_1,\dots,z_n) = z_n^\gamma \  F'(z_1,\dots,z_{n-1}),
\end{equation}
where $F'$ is $A'$-hyper\-geo\-metric. 
Hence, if $A$ is a pyramid over a configuration $A'$ which  admits a
stable $A'$-hyper\-geo\-metric function then, clearly, so does $A$.

In order to state our results we need to describe certain special 
configurations, which play an important role throughout this paper.
A configuration $A\subset \Z^d$ is said to be
a {\it Cayley configuration\/} if there exist vector configurations
$A_1,\dots,A_{s}$ in $\Z^r$ such that
\begin{equation}
\label{Cayley}
A \,\,\, =   \,\, \,
\{e_1\} \! \times A_1 \! \, \, \cup \,\,
\cdots \, \, \cup \,\,
\{e_{s}\} \! \times \! A_{s}
\,\, \subset \,\, \Z^{s} \times \Z^r ,
\end{equation}
where $e_1,\dots, e_{s}$ is the standard basis of $\Z^{s}$.  Note that
we may assume that all the $A_i$'s consist of at least two points
since, otherwise, $A$ would be a pyramid.

A Cayley configuration is said to be a {\em Lawrence} configuration if all the
configurations $A_i$ consist of exactly two points.  Thus, up to affine isomorphism, we may assume that $A_i = \{0,\gamma_i\}$, $\gamma_i \in \Z^r\backslash \{0\}$.  It follows from our assumptions that the vectors $\gamma_1,\dots,\gamma_s$ must span
$\Z^r$ over $\Z$.   We note that the codimension of a Lawrence configuration is $s-r$.

We say that a Cayley configuration is 
{\em essential} if $s=r+1$ and the Minkowski sum
$\,\sum_{i \in I} A_i \,$ has affine dimension at
least $|I|$ for every proper subset $I$ of $\{1,\dots,r+1\}$.
For a codimension-two essential Cayley configuration,
 $r$ of the configurations $A_i$, say
$A_1,\dots, A_r$, must consist of two vectors and the remaining one,
$A_{r+1}$, must consist of three vectors.  If we set $A_i=\{\mu_i, \nu_i\}\subset \Z^r$, $i=1,\dots, r$, then it follows from the fact that
$A$ is essential that the vectors $\gamma_i = \nu_i - \mu_i$ are linearly independent over $\Q$.  Thus, modulo affine equivalence, we may assume without loss of generality that $A_i = \{0,\gamma_i\}$, $i=1,\dots, r$, where $\gamma_1,\dots, \gamma_r$ are linearly independent over $\Q$ and $A_{r+1} = \{0,\alpha_1,\alpha_2\}$ with $\alpha_1$,
$\alpha_2$ are not both contained in a subspace generated by a proper subset of
$\gamma_1,\dots, \gamma_r$.

In order to simplify our statements we will allow ourselves a slight abuse of
notation and consider the configuration
$$\left(
\begin{array}{cccc}
1 & 1 & 0 & 0\\
0 & 0 & 1 & 1
\end{array}
\right)
$$
as a Lawrence configuration and the zero-dimensional configuration $ (1\  1\  1)$ as a Cayley essential configuration.

It has been shown in \cite{binom, rhf} that both Lawrence configurations and 
essential Cayley configurations admit stable rational $A$-hyper\-geo\-metric functions.  This is done by exhibiting explicit functions constructed as 
toric residues.  Our main result asserts that if $A$ has codimension two then these are the only configurations that admit such functions.

\begin{theorem}\label{th:conj}
A codimension two configuration $A$ admits a
stable rational hyper\-geo\-metric function if and only if
it is affinely equivalent to either
an essential Cayley configuration or a Lawrence configuration.
\end{theorem}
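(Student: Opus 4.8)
The ``if'' implication is known: \cite{binom, rhf} produce explicit stable rational $A$-hypergeometric functions, realized as toric residues, for every Lawrence and every essential Cayley configuration, so I concentrate on the forward implication. By the discussion around \eqref{pyramid} we may assume $A$ is not a pyramid; equivalently, choosing a $\Z$-basis of the rank-two lattice $M$ from \eqref{lattice} and taking the rows of the resulting $n\times 2$ matrix yields nonzero vectors $b_1,\dots,b_n\in\Z^2$, the Gale dual of $A$. In these terms the two configurations of the theorem have transparent shapes: the Gale dual of a Lawrence configuration is centrally symmetric, $\{\pm g_1,\dots,\pm g_s\}$, while that of an essential Cayley configuration is, up to $\GL(2,\Z)$, of the form $\{(1,0),(0,1),(-1,-1)\}\cup\{\pm\delta_1,\dots,\pm\delta_r\}$. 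Moreover, a proper face of $\operatorname{conv}(A)$ with nontrivial discriminant corresponds to an index set $I$ for which the $b_i$, $i\in I$, lie on a line through the origin and positively span it, and its facial discriminant $D_{A'}$ is then, up to monomial factors, a codimension-one discriminant of a projection of the remaining $b_j$.

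Let $F=P/Q$ be a stable rational $A$-hypergeometric function in lowest terms. Its polar locus lies in the singular locus of $D_n/H_A(\beta)$, hence in $\{E_A=0\}=\bigcup_{A'}\{D_{A'}=0\}$; since non-constant sparse discriminants are divisible by no variable and $F$ is homogeneous for the torus action defined by the Euler operators, $Q$ is a product $\prod_{A'}D_{A'}^{\,m_{A'}}$ of powers of nontrivial facial discriminants. Stability is used only through the elementary observation that if a variable $z_i$ did not occur in $Q$ then $F$ would be polynomial in $z_i$ and some $\partial_i^{\,k}F$ would vanish; hence every variable occurs in $Q$, that is $\bigcup_{m_{A'}>0}A'=A$. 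The argument then splits according to whether or not the top discriminant $D_A$ occurs among the factors of $Q$. I expect the first case to force $A$ to be an essential Cayley configuration, the second to force a Lawrence configuration, and a ``mixed'' denominator, containing $D_A$ together with a proper facial discriminant, to be impossible --- the latter ruled out by comparing the torus-weights of the two kinds of factor against the toric operators.

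Suppose first that $D_A$ divides $Q$. Using the toric operators $\partial^u-\partial^v$, $u-v\in M$, together with rationality, I would show that a function having a pole along the irreducible hypersurface $\{D_A=0\}$ and annihilated by all of $H_A(\beta)$ can exist only when the $b_i$ arrange into the Gale-dual pattern of a Cayley configuration $A=\{e_1\}\times A_1\cup\dots\cup\{e_s\}\times A_s$ as in \eqref{Cayley}; then, identifying $F$ up to the action of contiguity operators with the toric residue attached to that Cayley structure, the requirement that no partial derivative of $F$ vanish forces $s=r+1$ together with the full-dimensionality of every partial Minkowski sum, i.e. that the Cayley configuration is essential. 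This is the codimension-two case of the conjecture of \cite{rhf}. In the complementary case $D_A\nmid Q$, so that $Q$ is a product of discriminants of proper faces $A'=A\setminus\{a_i:i\in I\}$: nontriviality of $D_{A'}$ forces the $b_i$, $i\in I$, onto a single ray, and passing to this face makes the variables $z_i$, $i\in I$, enter $F$ in the separated, pyramidal manner of \eqref{pyramid}, producing a stable rational hypergeometric function on fewer variables. Iterating this reduction while tracking the covering condition $\bigcup_{m_{A'}>0}A'=A$ and the fact that every $D_{A'}$ in $Q$ is genuinely a pole, I expect to be driven into the situation in which $A$ is a Cayley configuration all of whose blocks $A_i$ have exactly two points, i.e. a Lawrence configuration.

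The main obstacle is the case in which $D_A$ divides $Q$: converting the local datum ``$F$ has a pole along $D_A$'' into the global combinatorial conclusion that $A$ decomposes as an essential Cayley configuration. The planar Gale picture does most of the work, but the delicate points are to show that the full collection of toric operators is rigid enough simultaneously to reconstruct $F$ as a toric residue and to impose the Cayley block decomposition on $\{b_1,\dots,b_n\}$, and to bound the possible pole orders $m_{A'}$ so as to exclude spurious configurations; these rigidity estimates, more than the reduction underlying the Lawrence case, are the technical heart of the proof.
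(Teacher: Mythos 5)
Your reduction to the Gale-dual picture, the observation that stability forces every variable to occur in the denominator, and the factorization of $Q$ into facial discriminants via the singular locus of $D_n/H_A(\beta)$ are all sound preliminaries, and the ``if'' direction is correctly delegated to \cite{binom,rhf}. But the proposal stops exactly where the theorem begins. The two decisive implications --- that a pole along $D_A$ forces the essential Cayley structure, and that the complementary case forces a Lawrence configuration --- are announced with ``I would show'' and ``I expect'' and never argued. In particular, the statement that a rational $A$-hypergeometric function with poles along $\{D_A=0\}$ exists only for Cayley essential configurations is precisely Conjecture~1.3 of \cite{rhf} (in codimension two), i.e.\ the open problem this theorem settles; invoking it as a step is assuming the conclusion in the hard case. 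Your suggested mechanism --- that the toric operators are ``rigid enough'' to reconstruct $F$ as a toric residue and thereby impose the Cayley block decomposition --- has no supporting argument, and in this paper the identification of rational solutions with toric residues (Theorem~\ref{th:dim1}) is proved \emph{after}, and using, Theorem~\ref{th:conj}, so it cannot be the engine of its proof. The dismissal of the ``mixed denominator'' case by comparing torus-weights is likewise unsubstantiated.

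What actually closes the gap in the paper is an entirely different mechanism that your outline never touches: dehomogenize $F$ to a bivariate rational function, expand from a vertex of the Newton polytope of the denominator as a sum of canonical series $\varphi_{\sigma(I)}$ (Theorem~\ref{laurentseries}), observe that every $\delta$-diagonal of a bivariate rational function is algebraic (Proposition~\ref{diag}), strip off the polynomial factor $h(n)$ via Theorem~\ref{notalgebraic} to reduce to central factorial-ratio series, and then invoke the Beukers--Heckman classification of integral height-one factorial ratios to pin the reduced Gale configuration $\tilde B$ down to one of three matrices $B_1,B_2,B_3$, of which only $B_1$ survives the rationality constraint on restrictions to boundary rays (Lemma~\ref{supportcone} together with Theorem~\ref{notalgebraic}(ii)). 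Without some substitute for this arithmetic input --- or at least a concrete rigidity argument in place of the ones you defer --- the proposal is an organizational skeleton rather than a proof.
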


As an immediate corollary to Theorem~\ref{th:conj} we obtain a proof
for the codimension-two case of Conjecture~1.3 in \cite{rhf}.  We
recall that a configuration $A$ is said to be {\em gkz-rational} if
the discriminant $D_A$ is not a monomial and $A$ admits a rational
$A$-hyper\-geo\-metric function with poles along the discriminant locus
$D_A=0$.  Such a function is easily seen to be stable.  Thus, by
Theorem~\ref{th:conj}, $A$ must be either a Lawrence or a Cayley
essential configuration.  But, if ${\rm codim}(A)>1$, the sparse
discriminant of a Lawrence configuration is $1$, and therefore the
only codimension-two gkz-rational configurations are Cayley essential
as asserted by \cite[Conjecture~1.3]{rhf}.

Let us briefly outline the strategy for proving Theorem~\ref{th:conj}.
The fact that an $A$-hyper\-geo\-metric function $F(z)$ of degree
$\beta$ satisfies $d$ independent homogeneity relations, one for each
row of the matrix $A$, implies that the study of codimension-two
rational $A$-hyper\-geo\-metric functions may be reduced to the study
of rational power series in two variables whose coefficients satisfy
certain recurrence relations.  Now, it follows easily from the
one-variable Residue Theorem that the {\it diagonals} of a rational
bivariate power series define algebraic one-variable functions.  On
the other hand, coming from an $A$-hyper\-geo\-metric function, these
univariate functions are classical one-variable hyper\-geo\-metric
functions.  Theorem~\ref{notalgebraic} allows us to reduce the study
of these one-variable functions to those studied by Beukers-Heckman
\cite{bh} (see also \cite{bober,frv}).  Analyzing the possible
functions arising as diagonals of a bivariate rational function leads
us to conclude that $A$ must be affinely equivalent to an essential
Cayley configuration or a Lawrence configuration.

In the latter case, the stable rational $A$-hyper\-geo\-metric
functions have been studied in \cite{binom} where it is shown that an
appropriate derivative of such a function may be represented by a
multivariate residue.  In \S\ref{sec:residues} we show that a similar
result holds for essential Cayley configurations of codimension two.
After recalling the construction of rational $A$-hyper\-geo\-metric
functions by means of toric residues, we show in
Theorem~\ref{th:dim1}, that if the parameter $\beta$ lies in the
so-called Euler-Jacobi cone $\EE$ (see \eqref{EJ}), the space of
rational $A$-hyper\-geo\-metric function of degree $\beta$ is
one-dimensional.  This proves \cite[Conjecture~5.7]{rhf} for any
codimension-two essential Cayley configuration.

Finally, in Section~\ref{sec:rat-bivariate} we apply our results to
study the rationality of classical bivariate hyper\-geo\-metric series (in the sense of Horn,
see Definition~\ref{def:Horn} and Remark~\ref{rem:Horn}).
Theorem~\ref{th:conjclassic} shows that any bivariate Taylor series
whose coefficients are quotients of factorials of integer linear forms
as in \eqref{eq:horn} defines a
rational function only if the linear forms arise from a Lawrence or
Cayley essential configuration. We end up by considering the case
of Horn series supported in the first quadrant.
\medskip

\noindent {\bf Acknowledgments:}
EC would like to thank the Fulbright Program and the University of Buenos
Aires for their support and hospitality.
AD was partially supported by UBACYT X064, CONICET PIP 5617 and ANPCyT
PICT 20569, Argentina.  FRV would like to thank the program RAICES of
Argentina and the NSF for their financial support.  He would also like
to thank the department of Mathematics of the Universidad de Buenos
Aires, Argentina and the Arizona Winter School, where some of this
work was done.

\section{Univariate algebraic hyper\-geo\-metric functions}\label{sec:univariate}

In this section we study algebraic hyper\-geo\-metric
series of the form

\begin{equation}\label{series}
u(z) \ :=\ \sum_{n=0}^\infty \ \frac{\ \prod_{i=1}^r
\,(p_i\,n + k_i)!\ }{\prod_{j=1}^s \,(q_j\,n)!}\ z^n,\quad k_i \in \N.
\end{equation}
We are interested in the case when the series (\ref{series}) has a
finite, non-zero, radius of convergence. Hence we assume that
\begin{equation}
\label{samesum}
{\sum_{i=1}^r p_i \ = \ \sum_{j=1}^s  q_j}.
\end{equation}

\bigskip

The case $k_i=0$ for $i=1,\dots,r$, namely, the series
\begin{equation}\label{centralseries}
v(z) \ :=\ \sum_{n=0}^\infty \ \frac{\ \prod_{i=1}^r
\,(p_i\,n)!\ }{\prod_{j=1}^s \,(q_j\,n)!}\ z^n,\qquad p_i\not=q_j,
\end{equation}
has been studied in \cite{bh, bober, frv}.  If $r=s=0$ all
coefficients are equal to~$1$ and $v(z)=(1-z)^{-1}$ is
rational. Assume then that $r,s>0$.  Using the work of Beukers
and Heckman \cite{bh} it was shown in \cite{frv} that $v$ defines an
algebraic function if and only if the {\em height}, defined as $d := s -
r$, equals~$1$ and the factorial ratios
\begin{equation}
\label{ratios}
A_n \ :=\ \frac{\ \prod_{i=1}^r
\,(p_i\,n)!\ }{\prod_{j=1}^s \,(q_j\,n)!}
\end{equation}
are integral for every $n\in \N$. (In the last case, $v$ is not a
rational function, in fact, since by Stirling the coefficients are, up
to a constant, asymptotic to $1/\sqrt n$ times an exponential.)

Beukers and Heckman \cite{bh} actually gave an explicit classification of
all algebraic univariate hyper\-geo\-metric series. As a consequence, we
can also classify all integral factorial ratio sequences
(\ref{ratios}) of height $1$ (see
\cite[\S~7.2]{frv_book},\cite{vasyunin},
\cite[Theorem~1.2]{bober}). We may clearly assume that
\begin{equation}
\label{gcd}
\gcd(p_1,\dots,p_r,q_1,\dots,q_{r+1}) \ =\ 1.
\end{equation}
Then there exist three infinite families, where $A_n$ is given by
\begin{equation}\label{ratios1}
 \frac{\ ((a+b)\,n)!\ }{(a\,n)!\,(b\,n)!},\qquad \gcd(a,b)
=1,
\end{equation}
\begin{equation}
\label{ratios2}
 \frac{\ (2(a+b)\,n)!\,(b\,n)!\ }{((a+b)\,n)!\,(2b\,n)!\,(a\,n)!},
\qquad \gcd(a,b)=1,
\end{equation}
or
\begin{equation}
\label{ratios3}
\frac{\ (2a\,n)!\,(2b\,n)!\ }{(a\,n)!\,(b\,n)!\,((a+b)\,n)!},
\qquad \gcd(a,b)=1,
\end{equation}
and $52$ sporadic cases listed in \cite[Table~2]{bober}.

\begin{remark}
 Because of the connections with step functions, it is also
 interesting to study integral factorial ratio sequences satisfying
 (\ref {samesum}) but of height different than one.  Partial results
 in this direction are contained in \cite{bell}.  The connections
 with quotient singularities and the Riemann Hypothesis are explored
 in \cite{borisov}.
\end{remark}

Note that we can write a series $u$ as in \eqref{series} as follows
$$
u(z)=\sum_{n\geq 0}h(n)A_n\,z^n,
$$
where $h$ is the polynomial
\begin{equation}\label{fallingpolynomial}
h(x)=\prod_{i=1}^r\prod_{j=1}^{k_i}(p_ix+j)
\end{equation}
and $A_n$ is as in \eqref{ratios}. We now show that $u$ and $v$ can
only be algebraic simultaneously. More generally, we have the following.

\begin{theorem} \label{notalgebraic}
Suppose
$$
u(z):=\sum_{n\geq 0}h(n)A_n\,z^n, \qquad v(z):=\sum_{n\geq 0}A_n\,z^n,
$$
where $h(x)\in \Z[x]$ is non-zero and $A_n$ is as in \eqref{ratios}. Then:

(i) The series $u(z)$ is algebraic  if and only if $v(z)$ is
algebraic. 

(ii) If $u$ is rational then $A_n=1$ for all $n$ and
$$
v(z)=\frac 1{1-z}.
$$
\end{theorem}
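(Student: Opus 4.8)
The plan is to reduce everything to the univariate Beukers--Heckman classification recalled above, together with the elementary structure of the power series $u$ and $v$. The key observation is that both $u(z)=\sum h(n)A_n z^n$ and $v(z)=\sum A_n z^n$ are solutions of ordinary differential equations with polynomial coefficients, because the ratios $A_{n+1}/A_n$ and $h(n+1)/h(n)$ are rational functions of $n$; hence $u$ and $v$ are holonomic, and more precisely they are (generalized) hypergeometric functions. So the first step is to write down, explicitly, the hypergeometric data of $v$: the equation (\ref{samesum}) guarantees a finite nonzero radius of convergence, and after reducing by (\ref{gcd}) the series $v$ is of the form studied in \cite{bh,frv}. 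The parameters of $u$ are obtained from those of $v$ by adjoining, for each factor $(p_i x+j)$ of $h$, a numerator parameter; crucially, since $h(x)\in\Z[x]$, one can always arrange that the extra parameters are rational. Thus $u$ is again a classical hypergeometric function in the Beukers--Heckman sense.

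For part (i), I would argue that algebraicity of $u$ and of $v$ are equivalent by examining how the monodromy behaves under the operation of multiplying coefficients by a polynomial $h(n)$. One clean way: the operator $\theta = z\,d/dz$ acts on $v$, and $h(\theta)$ applied to $v$ has $n$-th coefficient $h(n)A_n$, so $u = h(\theta)\,v$. If $v$ is algebraic, then $v$ has finite monodromy, hence so does every element of the $D$-module it generates, and in particular $u$ is holonomic with finite monodromy, therefore algebraic. Conversely, if $u$ is algebraic, I would use the interlacing/Beukers--Heckman criterion: algebraicity is equivalent to the parameters satisfying the interlacing condition for \emph{every} integer $\ell$ coprime to the relevant denominators. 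Adding numerator parameters $j/p_i$ (for $j=1,\dots,k_i$) to $v$ can only make interlacing \emph{harder} to satisfy, except that one must check these added parameters do not accidentally cancel denominator parameters of $v$; since $v$ has no integer parameters among its nontrivial data (the numerator $x^n$ would force a pyramid-type degeneration) one checks the extra parameters are genuinely new, so interlacing for $u$ forces interlacing for $v$, giving $v$ algebraic. The main technical obstacle is precisely this bookkeeping: ruling out the degenerate coincidences between the parameters introduced by $h$ and those already present in $A_n$, and handling the reduction (\ref{gcd}) uniformly.

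For part (ii), suppose $u$ is rational. By part (i), $v$ is algebraic, so by the result of \cite{bh,frv} recalled above either $v$ is itself rational, or $v$ is one of the genuinely algebraic (non-rational) height-one factorial-ratio series. In the latter case Stirling's formula gives that the coefficients $A_n$ of $v$ grow like $c\,\rho^n/\sqrt{n}$ with $c\ne 0$, hence $h(n)A_n \sim c\,h(n)\rho^n/\sqrt n$, and since $h$ is a nonzero polynomial these coefficients still have a $\sqrt n$ in the denominator up to a polynomial factor — so $u$ cannot be rational (a rational power series has coefficients satisfying a linear recurrence with constant coefficients, hence of the form (polynomial)$\times\rho^n$, summed over finitely many $\rho$, with no $1/\sqrt n$ asymptotics). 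Therefore $v$ is rational. Among the factorial-ratio series, a rational one must have trivial numerator/denominator data after the reduction (\ref{gcd}): indeed any nontrivial hypergeometric $v$ has a logarithmic or branch singularity, in particular infinite or irrational monodromy and is not rational; this forces $r=s=0$ in the reduced form, i.e. $A_n=1$ for all $n$ and $v(z)=1/(1-z)$. The last step — that a nontrivial reduced factorial-ratio series is never a rational function — can be pulled directly from the Beukers--Heckman classification, since the only rational classical hypergeometric functions are, up to the obvious trivialities, powers of $(1-z)$, and here the leading coefficient is $A_0=1$.
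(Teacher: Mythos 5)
The easy direction of (i) you handle correctly ($u=h(\theta)v$, and applying a polynomial in $\theta$ to an algebraic function yields an algebraic function), and your Stirling-asymptotics argument for (ii) is a legitimate alternative to the paper's, granted (i) and the classification from \cite{frv} that the paper recalls. The genuine gap is in the converse direction of (i), which is the substance of the theorem. Your plan is to read off hypergeometric parameters for $u$ and apply the Beukers--Heckman interlacing criterion directly to them. This fails for two reasons. First, $h$ is an arbitrary nonzero element of $\Z[x]$ (the shape \eqref{fallingpolynomial} is only the motivating special case), so its roots --- and hence the extra parameters of $u$ --- need not be rational, and the interlacing criterion, a statement about points $e^{2\pi i k\alpha}$ on the unit circle, does not apply to them. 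Second, and more fundamentally, the coefficient ratio $h(n+1)A_{n+1}/(h(n)A_n)$ introduces each root $\rho$ of $h$ \emph{both} as a numerator parameter ($1-\rho$) and as a denominator parameter ($-\rho$), the two differing by an integer. Integer-congruent numerator/denominator parameters make the associated hypergeometric equation reducible, and the Beukers--Heckman criterion is stated only for irreducible equations; so ``interlacing for the parameters of $u$'' is never satisfied, and the heuristic that extra numerator parameters ``make interlacing harder'' has no content. The coincidence you propose to rule out as an accident in fact occurs always, by construction, and is precisely the obstruction.

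What is needed instead (and what the paper does) is: (a) Eisenstein's theorem --- the coefficients $h(n)A_n$ of the algebraic series $u$ are integral outside a finite set of primes --- combined with the formula $\mathfrak{v}_l(A_n)=\sum_{\nu\ge 1}\LL(n/l^\nu)$ for the Landau step function $\LL$, to conclude that $\LL\ge 0$, hence $A_n\in\Z$ for all $n$ and $d=s-r\ge 0$; the case $d=0$ then forces $\LL\equiv 0$ and $v=1/(1-z)$ outright. (b) For $d>0$, one works with the monodromy of the hypergeometric equation of $v$, not of $u$: after cancellation the parameters satisfy $\alpha_i\ne\beta_j$ with $\alpha_i,\beta_j\in(0,1]$, so the local system $V$ is irreducible by \cite[Proposition~3.3]{bh}, and the monodromy-equivariant map $h(\theta):V\to U$ is therefore injective; finiteness (for (i)) or triviality (for (ii)) of the monodromy of $U$ then transfers to $V$. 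This monodromy transfer is the missing idea that replaces your interlacing step.
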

\begin{proof}
  We first prove (i).  One direction is clear as
  $u=h(\theta)v$. Suppose then that $u$ is algebraic. By a theorem of
  Eisenstein (see \cite{DP} for a modern treatment and further
  references) the coefficients $h(n)A_n$ of $u$ are integral away from
  a finite set of primes.

 We may assume without loss of generality that $h$ is primitive, i.e.,
 that the $\gcd$ of all of its coefficients is $1$. Hence, for any
 prime $l$ there are at most $\deg(h)$ congruences classes $n\bmod l$
 for which ${\mathfrak v}_l(h(n))>0$, where ${\mathfrak v}_l$ denotes
 the valuation at $l$.

It follows that for all sufficiently large primes $l$ the number of
 exceptions to
\begin{equation}
\label{val1}
{\mathfrak v}_l(A_n)={\mathfrak v}_l(h(n)A_n)\geq 0, \qquad 
0\leq n <l,
\end{equation}
is at most $\deg(h)$, independent of $l$.  In other words, the
valuation at $l$ of the coefficientes of $u$ is essentially that of
the coefficients of $v$. We will exploit this fact in order to prove
the theorem.

It is easy to verify (see \cite{frv1} for  details on the
following discussion) that
$$
{\mathfrak v}_l(A_n)=\sum_{\nu\geq 1} \LL\left(\frac n{l^\nu}\right),
$$
where $\LL$ is the {\it Landau function}
$$
\LL(x):=\sum_{j=1}^s\{q_j x\}-\sum_{i=1}^r\{p_i x\}, \qquad x \in \R.
$$
Here $\{x\}$ denotes the fractional part of $x\in \R$.

The  following properties of $\LL$  hold:
$\LL$ is periodic, with period $1$, locally constant, right continuous
with at most finitely many step discontinuities,
\begin{equation}
\label{L-limit}
\lim_{x\rightarrow 1^-}\LL(x)=d
\end{equation}
and away from the discontinuities
\begin{equation}
\label{L-symmetry}
\LL(-x)=d-\LL(x).
\end{equation}
Furthermore, by a theorem of Landau, $A_n\in \Z$ for all $n$ if and
only if $\LL(x)\geq 0$ for all $x\in \R$.

Since $\LL(0)=0$, for all sufficiently large primes $l$
\begin{equation}
\label{val2}
{\mathfrak v}_l(A_n)=\LL\left(\frac n l\right),\qquad 0\leq n<l.
\end{equation}
Indeed, as $\LL$ is locally constant we have $\LL(x)=0$ for
$x\in[0,\delta_0)$ for some $\delta_0>0$. If $l>\delta_0^{-1}$ and $0\leq n
<l$ then
$$
\frac n{l^k} < \delta_0, \qquad k>1.
$$
 More generally, let
$$
[0,1)=\coprod_\nu \,[\gamma_\nu,\delta_\nu)
$$
be a decomposition of $[0,1)$ into finitely many disjoint subintervals
$I_\nu:=[\gamma_\nu,\delta_\nu)$ such that $\LL$ is constant on each
$I_\nu$. Let $\mu$ the minimum length of the $I_\nu$'s. If
$l>N\mu^{-1}$ for some integer $N>0$ then the number of rationals
of the form $n/l$ in each $I_\nu$ is at least $N$.

Taking $N > \deg h$ and combining \eqref{val1} with
\eqref{val2} we conclude that $\LL(x)\geq 0$ for all $x\in
[0,1)$. Consequently, $A_n\in \Z$ for all $n$ and also $d\geq 0$ by
\eqref{L-limit}.

If $d=0$ then $\LL\equiv 0$ as $\LL(x)\leq d$ by
\eqref{L-symmetry}. It follows that in this case $v(z)=1/(1-z)$ and
$u(z)=h(\theta)v(z)$ are both rational and $r=s=0$. Hence we may
assume $d>0$.

We can write the series $v(z)$ as a hyper\-geo\-metric series (recall we
assume \eqref{samesum})
\begin{eqnarray}
\label{series2}
\nonumber v(z)& \ =\ &   \sum_{n=0}^\infty \ \frac{\prod_{i=1}^r\prod_
{\ell=1}^{p_i} \bigl(\frac{\ell}{p_i}\bigr)_n }
{\prod_{j=1}^s\prod_{\ell=1}^{q_j} \bigl(\frac{ \ell}{q_j}\bigr)_n }\
(z/\kappa )^n\\
& \ =\ &   \sum_{n=0}^\infty \ \frac{\ (\alpha_1)_n\cdots (\alpha_t)_n
\ }
{\ (\beta_1)_n\cdots (\beta_t)_n\ }\ (z/\kappa )^n,
\end{eqnarray}
for some $0<\alpha_i,\beta_j \leq 1$ in $\Q$ for $1\leq i,j\leq t$,
with $\alpha_i \not= \beta_j$ for all $1\leq i,j\leq t$ and where
$\kappa:=\prod_{i=1}^rp_i^{p_i}/\prod_{j=1}^sq_j^{q_j}$. Note that the
number of $\beta$'s that equal~$1$ is precisely $d$. Hence, since
$d\geq 1$ at least one of factors in the denominator of the
coefficient of $(z/\kappa)^n$ is $n!$ and $v(\kappa z)$ is a classical
${}_tF_{t-1}$ hyper\-geo\-metric series.  We remark that the
discontinuities of $\LL$ in $(0,1)$ occur precisely at the
$\alpha_i$'s and $\beta_j$'s.

It follows that $v\in V$, where $V$ is the space of local solutions to
the corresponding hyper\-geo\-metric differential equation $Lv=0$ at some
base point $t_0\neq 0,\kappa,\infty$. The nature of the parameters
$\alpha_i,\beta_j$ of $L$ guarantees that the action of monodromy on
$V$ is irreducible (see \cite[Proposition~3.3]{bh}). On the other
hand, let $U$ be the space of local functions at $t_0$ obtained by
analytic continuation of $u(z)$. The map $h(\theta):V\to U$ preserves
the action of monodromy. By the irreducibility of $V$ this map is
injective. We conclude that the monodromy group of $V$ must be finite
since this is true of $U$ given the hypothesis that $u$ is
algebraic. This shows, in turn, that $v$ is algebraic.

Now assume that $u$ is rational. If $d>0$ the above argument applies
and since the monodromy group of $U$ is trivial so is that of
$V$. Therefore $v$ is rational contradicting the assumption that
$d>0$. To see this note, for example, that $d>0$ implies that $\LL$ is
not identically zero by \eqref{L-limit} and hence $t\geq 1$. In
particular, the local monodromies are not trivial. We conclude that
$d=0$ and consequently, as pointed out above, $\LL=0$ proving (ii).
\end{proof}

\begin{remark}
\label{alg}
 Note that $d$ is the
 multiplicity of the eigenvalue $1$ of the local monodromy action on
 $V$ at $z=0$.  By Levelt's Theorem
 (\cite{levelt},\cite[Theorem~3.5]{bh}) this monodromy has a Jordan
 block of size $d$ (with eigenvalue $1$) and hence cannot be finite
 if $d>1$.  
\end{remark}

\section{Bivariate rational series}
\label{sec:furstenberg}

In this section we discuss Laurent series expansions for rational
functions in two variables.  We prove a lemma which will be of
use in \S \ref{sec:residues} and recall one of the key tools to
determine whether a bivariate series defines a rational function,
namely the observation that a {\it diagonal} of a rational bivariate
series is algebraic.

Let $p(x_1,x_2), q(x_1,x_2) \in \C[x_1,x_2]$ be polynomials in two
variables without common factors and let $f(x_1,x_2) =
p(x_1,x_2)/q(x_1,x_2)$.  We denote by $\NN(q)\subset \R^2$ the Newton
polytope of $q$. Throughout this section we will assume that $\NN(q)$ is two-dimensional.  Let $v_0$ be a vertex of $\NN(q)$, $v_1, v_2$ the
adjacent vertices, indexed counterclockwise and
$\mu_i = v_i - v_0\in \Z^2$, $i=1,2$.  Hence,
\begin{equation}\label{newtoncone}
\NN(q) \subset v_0 + \R_{>0}\cdot \mu_1 + \R_{>0}\cdot \mu_2.
\end{equation}
We can write
$$q(x_1,x_2) \ = x^{v_0} \, (1 - \tilde q(x_1,x_2)),$$
with the support of $\tilde q$ contained in the  cone $\CC\, :=\, \R_{\geq 0}\,\mu_1 + \R_{\geq 0}\,\mu_2$.  Thus we obtain a Laurent expansion of the rational function $f(x)$ as
\begin{equation*}
f(x) \ =\  \sum_{r=0}^\infty \tilde q(x)^r\,,
\end{equation*}
whose support is contained in a cone of the form
$w  + \CC$
for a suitable $w\in \Z^2$.  That is,
$f(x)$ has an expansion
\begin{equation}\label{eq:expansion1}
f(x) = \sum_{m\in \Z^2} a_m \,x^m\,,
\end{equation}
whose support
$\{m\in\Z^2 : a_m\not=0\}$ is contained in $w  + \CC$ for some 
$w\in \Z^2$.

Moreover, the above series converges in a region of the form
\begin{equation} \label{eq:region}
|x^{\mu_1}| < \varepsilon\ ,\quad |x^{\mu_2}| < \varepsilon\,,
\end{equation}
for $\varepsilon$ sufficiently small, as observed in \cite[Proposition 1.5, Chapter 6]{gkzbook}.

\begin{lemma}\label{supportcone}
Given a series (\ref{eq:expansion1}) as above then, for each $i=1,2$, there 
exist infinitely many exponents of the form $m = w_i + r \mu_i$, $w_i\in \Z^2$, $r\in \N$, such that $a_m \not= 0$.  In particular,
the support of the series (\ref{eq:expansion1}) is not contained in any subcone  ${w'} + \CC'$, where $\CC'\, :=\, \R_{\geq 0}\,\mu'_1 + \R_{\geq 0}\,\mu'_2$ is properly contained in $\CC$.
\end{lemma}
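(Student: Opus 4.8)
The plan is to exploit the fact that $f=p/q$ is rational, so that (formally) $q(x)\cdot f(x)=p(x)$, and to read off a recursion for the coefficients $a_m$ along the ray direction $\mu_i$. Fix $i=1$ (the case $i=2$ is symmetric). Write $q(x)=x^{v_0}(1-\tilde q(x))$ with $\operatorname{supp}(\tilde q)\subset\CC\setminus\{0\}$, and recall that $\operatorname{supp}(\tilde q)$ meets the open ray $\R_{>0}\mu_1$ only possibly in finitely many lattice points; since $v_1=v_0+\mu_1$ is a vertex of $\NN(q)$ adjacent to $v_0$, the monomial $x^{v_0+\mu_1}$ does occur in $q$, i.e. the coefficient $c_1$ of $x^{\mu_1}$ in $\tilde q$ is nonzero. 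The idea is that this ``corner'' monomial forces the coefficients of $f$ along the $\mu_1$-ray to satisfy a nontrivial linear recursion with nonzero leading term, and hence they cannot all eventually vanish.

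First I would make the ray structure explicit. Choose a linear functional $\ell\in(\Z^2)^\vee$ with $\ell(\mu_1)=0$ and $\ell(\mu_2)>0$; then $\CC=\{u:\ell(u)\ge 0\ \text{and}\ \ell'(u)\ge 0\}$ for a complementary functional $\ell'$ with $\ell'(\mu_1)>0$, $\ell'(\mu_2)=0$. On the slice $\ell=0$, the cone $\CC$ restricts to the ray $\R_{\ge 0}\mu_1$, and $\operatorname{supp}(\tilde q)\cap\{\ell=0\}$ is a finite set of points $d_1\mu_1,\dots,d_k\mu_1$ (with $d_1=1$ among them, by the previous paragraph, because $x^{\mu_1}\in\operatorname{supp}(\tilde q)$). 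Now restrict the identity $p(x)=q(x)f(x)$ — equivalently $(1-\tilde q(x))f(x)=x^{-v_0}p(x)$ — to the exponents lying on a fixed affine line $\{\ell=\ell(w)\}$ parallel to $\mu_1$. Because all of $\operatorname{supp}(\tilde q)$ has $\ell\ge 0$, and $f$ is supported in $w+\CC$, comparing the coefficient of $x^{w+t\mu_1}$ for $t$ large (so that $x^{-v_0}p$ contributes nothing) gives, after collecting only the $\ell=0$ part of $\tilde q$,
$$
a_{w+t\mu_1}\ =\ \sum_{j=1}^{k} c_j\, a_{w+(t-d_j)\mu_1}\quad\text{for all }t\gg 0,
$$
where $c_j$ is the coefficient of $x^{d_j\mu_1}$ in $\tilde q$ (contributions from exponents of $\tilde q$ with $\ell>0$ shift off this line and so do not appear). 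Wait — one must be slightly careful: terms of $\tilde q$ with $\ell>0$ pair with coefficients $a_\bullet$ on lines $\ell<\ell(w)$, which lie outside $w+\CC$ and hence vanish; so the displayed recursion is exactly correct for $t$ large.

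The recursion has characteristic polynomial $T^{d_{\max}}-\sum_j c_j T^{d_{\max}-d_j}$, whose constant term is $-c_{?}$ coming from the largest $d_j$; in particular it is not the zero polynomial, so a sequence satisfying it cannot be eventually zero unless it is identically zero from some point on, and then — running the linear recursion backwards, which is possible since the lowest-degree term $c_1 T^{d_{\max}-1}$ has nonzero coefficient $c_1\ne 0$ — the whole tail $\{a_{w+t\mu_1}:t\gg0\}$ would vanish. But the support of $f$ on this line is nonempty and, by the invertibility of the backward step, if it is nonzero for one large $t$ it is nonzero for infinitely many; more simply, since $f\not\equiv 0$ we may choose the line $\ell=\ell(w)$ through a point of $\operatorname{supp}(f)$, and then the backward-solvable recursion propagates nonvanishing to infinitely many $t$. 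This yields infinitely many $m=w_1+t\mu_1$ with $a_m\ne 0$, where $w_1:=w$; the same argument with roles of $\mu_1,\mu_2$ swapped gives the $i=2$ case.

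The final ``in particular'' is then immediate: if $\operatorname{supp}(f)\subset w'+\CC'$ with $\CC'=\R_{\ge0}\mu_1'+\R_{\ge0}\mu_2'\subsetneq\CC$, then one of the generating rays of $\CC$, say $\R_{\ge0}\mu_1$, is not contained in $\CC'$, so any translate $w'+\CC'$ contains only finitely many points of the ray-direction $\mu_1$ lattice line $\{w_1+t\mu_1:t\in\N\}$ — contradicting the infinitude just established. I expect the main obstacle to be the bookkeeping in isolating the $\ell=0$ slice of the identity $p=qf$ cleanly: one must verify that no ``unexpected'' contributions from $\operatorname{supp}(\tilde q)$ with $\ell>0$, nor from $x^{-v_0}p$, survive on the chosen line for $t$ large, which is where the hypothesis that $v_0$ is a vertex with $\mu_1,\mu_2$ spanning the Newton cone, and that $\operatorname{supp}(f)\subset w+\CC$, is used in full.
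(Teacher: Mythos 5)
Your strategy---extracting a linear recursion for the coefficients along a line parallel to $\mu_1$ from the identity $q\cdot f=p$---does not close, and the tell-tale sign is that your argument never uses the hypothesis that $p$ and $q$ have no common factor. Run your argument on $f=(1-x_1)/\bigl((1-x_1)(1-x_2)\bigr)$: the Newton polytope of the denominator is still the unit square with $\mu_1=(1,0)$, $\mu_2=(0,1)$, every step of your proof applies verbatim, yet the support of the expansion is the single vertical ray $\{0\}\times\N$, so the conclusion is false. Any correct proof must therefore invoke coprimality somewhere, and yours does not.

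The concrete failure is in the last step. Your recursion $a_{w+t\mu_1}=\sum_j c_j\,a_{w+(t-d_j)\mu_1}$ is valid only on the \emph{lowest} line $\ell=\ell(w)$ (so that the terms of $\tilde q$ with $\ell>0$ hit coefficients lying below the support) and only for $t$ large (so that $x^{-v_0}p$ does not contribute). That line need not meet $\mathrm{supp}(f)$ at all; and even when it does, the nonzero coefficients may all sit in the initial range where the inhomogeneous contribution of $p$ is still active, so the backward induction forces vanishing only down to that range and yields no contradiction. A coprime example satisfying all the hypotheses: $f=(1-x_1+x_1x_2)/\bigl((1-x_1)(1-x_2)\bigr)=1+\sum_{m_1\ge 0,\,m_2\ge 1}x_1^{m_1}x_2^{m_2}$. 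Here the line $m_2=0$ meets the support only at the origin, and the infinitely many nonzero coefficients live on the line $m_2=1$, where your simplification ``only the $\ell=0$ part of $\tilde q$ appears'' is no longer legitimate. Note also that the invertible backward step propagates \emph{vanishing} from large $t$ downward; it cannot ``propagate nonvanishing to infinitely many $t$.'' The paper's proof sidesteps all of this: it writes $f=\sum_{\ell}c_\ell(x_2)\,x_1^{\ell}$ with $c_\ell\in\C(x_2)$ and applies \cite[Lemma 3.3]{rhf}, which says that because $p,q$ are relatively prime in $\C[x_2,x_2^{-1}][x_1]$ and the trailing coefficient $b_0(x_2)$ of $q$ is not a unit, some $c_{\alpha_0}$ fails to be a Laurent polynomial and hence supplies infinitely many nonzero terms on a single line. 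That is exactly where coprimality does the work your recursion cannot.
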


\begin{proof} We may assume without loss of generality that $\mu_1 = (s_1,0)$ and that $\mu_2 = (0,s_2)$, $s_1,s_2>0$.  It then suffices to show that for
some $\alpha_0\in \Z$, the series (\ref{eq:expansion1}) contains infinitely many 
terms with non-zero coefficient and  exponent of the form $(\alpha_0,m_2)$, $m_2\in \N$.

We write $p(x_1,x_2) = \sum_{j\geq 0} a_j(x_2) \, x_1^j$, $q(x_1,x_2)
= \sum_{j\geq 0} b_j(x_2) \, x_1^j$ and view them as relatively prime
elements in the ring $\C[x_2,x_2^{-1}][x_1]$.  The Laurent series
expansion (\ref{eq:expansion1}) for the rational function $p(x)/q(x)$
may be written as
$$ \frac {p(x)}{q(x)} \ =\ \sum_{\ell \geq \ell_0} c_\ell(x_2) x_1^{\ell},$$
where $c_\ell(x_2)$ lie in the fraction field of $\C[x_2,x_2^{-1}]$,
that is, the field of rational functions $\C(x_2)$.  Now, it follows
from \cite[Lemma 3.3]{rhf} that since $b_0$ is not a monomial and,
therefore, not a unit in the Laurent polynomial ring
$\C[x_2,x_2^{-1}]$, at least one of the coefficients $c_{\alpha_0}(x_2)$ is
not a Laurent polynomial and, hence there exist infinitely many
non-zero terms with exponents of the form $(\alpha_0,m_2)$.
\end{proof}

Given a bivariate power series
\begin{equation}\label{doubleseries}
f(x_1,x_2) \ :=\ \sum_{n,m\geq 0} a_{m,n} x_1^m x_2^n
\end{equation}
and $\delta = (\delta_1,\delta_2) \in \Z_{>0}^2$, with ${\rm
 gcd}(\delta_1,\delta_2) =1$, we define the $\delta$-diagonal of $f$
as:
\begin{equation}
  \label{diagonalseries}
  f_\delta(t) \ :=\ \sum_{r\geq 0} A_r t^r\, ,\quad A_r:=a_{\delta_1 r,
    \delta_2 r}. 
\end{equation}

The following observation goes back to at least Polya~\cite{Polya}.
We include a proof for the sake of completeness.

\begin{proposition}
\label{diag}
If the series (\ref{doubleseries}) defines a rational function, then
for every $\delta = (\delta_1, \delta_2) \in \Z_{>0}^2$, with
$\gcd(\delta_1, \delta_2) =1$, the $\delta$-diagonal $f_\delta(t)$ is
algebraic.
\end{proposition}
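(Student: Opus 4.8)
The plan is to reduce the extraction of the $\delta$-diagonal to a one-variable residue computation and then invoke the one-variable Residue Theorem, which produces algebraic functions. First I would note that since $\gcd(\delta_1,\delta_2)=1$, the change of variables is governed by the sublattice of $\Z^2$ generated by $(\delta_1,\delta_2)$ together with a complementary vector; concretely, pick $(a,b)\in\Z^2$ with $a\delta_2-b\delta_1=1$ and consider the monomial substitution $x_1 = s^{a} u^{\delta_1}$, $x_2 = s^{b} u^{\delta_2}$. Under this substitution a monomial $x_1^m x_2^n$ becomes $s^{am+bn} u^{\delta_1 m + \delta_2 n}$, and the terms contributing to the diagonal $m=\delta_1 r$, $n=\delta_2 r$ are exactly those for which the exponent of $s$ vanishes. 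Thus $f_\delta(t)$ is obtained, up to renaming $u^{\delta_1+\delta_2}$ or a suitable power as the new variable $t$, as the constant-term-in-$s$ of the rational function $g(s,u):=f(s^{a}u^{\delta_1}, s^{b}u^{\delta_2})$.

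Next I would realize this constant term as a contour integral: for fixed small $u$,
$$
\mathrm{CT}_s\, g(s,u) \;=\; \frac{1}{2\pi i}\oint_{|s|=\rho} g(s,u)\,\frac{ds}{s},
$$
where $\rho$ is chosen inside the region of convergence \eqref{eq:region} so that the geometric-series expansion of $f$ is valid on the contour. Since $g(s,u)$ is a rational function of $s$ with coefficients rational in $u$, the integrand $g(s,u)/s$ is a rational function of $s$, and the integral equals the sum of its residues at the poles lying inside $|s|=\rho$. Each such pole $s=s_k(u)$ is an algebraic function of $u$ — it is a branch of the algebraic curve defined by the denominator of $g$ — and the corresponding residue is a rational expression in $u$ and $s_k(u)$. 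Summing finitely many such algebraic quantities yields a function of $u$ that is algebraic, being a symmetric (hence rational) function of the roots lying in a fixed region, or more simply an algebraic function since the whole expression is built from algebraic functions by rational operations. Rewriting in terms of $t$ finishes the argument.

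The main obstacle is bookkeeping rather than conceptual: one must check that the poles $s_k(u)$ that lie inside the contour do not collide with the contour or escape it as $u$ ranges over a small punctured disk, so that the residue sum is locally a well-defined analytic function there, and then argue it is the germ of an algebraic function. The cleanest way around this is to choose the contour once and for all for $|u|$ small, observe that the denominator of $g(s,u)$, as a polynomial in $s$, has a factorization whose roots split into those inside and those outside $|s|=\rho$ by continuity (shrinking $|u|$ if necessary), and note that the elementary symmetric functions of the "inside" roots are algebraic over $\C(u)$ because they are roots of a factor of the resultant-type polynomial obtained by splitting; hence the residue sum, being a rational function of those symmetric functions, is algebraic. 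A small additional point: one must rule out the degenerate possibility that the denominator of $g(s,u)$ is, as a polynomial in $s$, a nonzero constant times a monomial — but in that case $g$ is a Laurent polynomial in $s$ and the constant term is visibly rational in $u$, so the conclusion holds trivially.
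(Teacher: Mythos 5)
Your overall strategy is the paper's: realize the diagonal as the constant term in an auxiliary variable $s$ of $f$ evaluated along a monomial substitution, write that constant term as $\frac{1}{2\pi i}\oint g(s,u)\,\frac{ds}{s}$, and conclude algebraicity because the integral is a finite sum of residues of a rational function at algebraic poles. (The paper states this in one line; your discussion of which poles stay inside the contour is a reasonable expansion of the same point.) However, the substitution you wrote down does not extract the diagonal. With $x_1=s^a u^{\delta_1}$, $x_2=s^b u^{\delta_2}$, the monomial $x_1^m x_2^n$ becomes $s^{am+bn}u^{\delta_1 m+\delta_2 n}$, and the constant term in $s$ collects the lattice points on the line $am+bn=0$, i.e.\ the direction $(-b,a)$. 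For this to be the diagonal direction $(\delta_1,\delta_2)$ you need $a\delta_1+b\delta_2=0$, i.e.\ $(a,b)$ proportional to $(\delta_2,-\delta_1)$; your normalization $a\delta_2-b\delta_1=1$ is incompatible with this (it would force $\delta_1^2+\delta_2^2=1$). Concretely, for $\delta=(1,2)$ and $(a,b)=(1,1)$ the diagonal term $(m,n)=(r,2r)$ maps to $s^{3r}u^{5r}$, so the constant term in $s$ of $g(s,u)$ is just $a_{0,0}$, not $f_\delta$.

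The repair is immediate and gives exactly the paper's formula: put the orthogonal vector in the $s$-slot and a unimodular complement in the $t$-slot, i.e.\ $x_1=s^{\delta_2}t^{\gamma_1}$, $x_2=s^{-\delta_1}t^{\gamma_2}$ with $\gamma_1\delta_1+\gamma_2\delta_2=1$; then $x_1^mx_2^n\mapsto s^{\delta_2m-\delta_1n}t^{\gamma_1m+\gamma_2n}$, the $s$-constant term is supported exactly on $(m,n)=(\delta_1r,\delta_2r)$ (using $\gcd(\delta_1,\delta_2)=1$), and the $t$-exponent there is $r$, so the contour integral equals $f_\delta(t)$ on the nose, with no rescaling of the variable. (If you insist on keeping $u$-exponents $(\delta_1,\delta_2)$ and taking $(a,b)=(\delta_2,-\delta_1)$, you obtain $f_\delta(u^{\delta_1^2+\delta_2^2})$, which still suffices since $f_\delta(t^N)$ algebraic implies $f_\delta$ algebraic; but note the power is $\delta_1^2+\delta_2^2$, not $\delta_1+\delta_2$ as you wrote.) One further small point: since $\gamma_1$ or $\gamma_2$ may be negative, ``$\eta$ and $t$ small enough'' should be read as choosing the contour radius $\eta$ depending on $|t|$ so that $(s^{\delta_2}t^{\gamma_1},s^{-\delta_1}t^{\gamma_2})$ stays in the polydisk of convergence; with that understood, the rest of your residue analysis goes through.
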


\begin{proof}
 The key observation is that by the one-variable Residue Theorem, we
 can write for $\eta $ and  $t$ small enough
$$
f_\delta(t) = \frac{1}{2\pi i}
\int_{|s|=\eta}\,f\left(s^{\delta_2}t^{\gamma_1},s^{-\delta_1}t^{\gamma_2}\right)\,
\frac {ds} s,
$$
where $\gamma_1,\gamma_2$ are integers such that
$\gamma_1\delta_1+\gamma_2\delta_2=1$. Thus, $f_\delta(t)$, being the
residue of a rational function, is algebraic.
\end{proof}

\begin{remark} We refer the reader to \cite{safonov} for
 generalizations of this result to rational series in more than two
 variables and to Furstenberg~\cite{furstenberg} and
 Deligne~\cite{Deligne} for the situation in characteristic $p>0$ where
 diagonals of rational functions on any number of variables
are algebraic.

\end{remark}

\section{$A$-hyper\-geo\-metric Laurent series}
\label{sec:Laurent}

The Laurent expansions of a rational $A$-hyper\-geo\-metric series are
constrained by the combinatorics of the configuration $A$.  In this
section we sketch the construction of such series.  The reader is
referred to \cite{sst2} for details.

Let $A$ be a regular configuration.  As always, we assume, without loss of
generality, that the points of $A$ are all distinct and that they span $\Z^d$.
We also assume that $A$ is not a pyramid.

We consider the $\C$-vector space:
$$S\  =\  \{\sum_{v\in{\Z^{n}}}\, c_v z^v\ ; \ c_v\in\C\} $$
of formal Laurent series in the variables
$z_1,\ldots,z_{n}$.
The matrix $A$ defines a $\Z^{d}$-valued grading in
$S$ by
\begin{equation}
\deg(z^v) \ :=\  A\cdot v \quad;\quad v\in \Z^{n}\,.
\end{equation}
The Weyl algebra $D_{n}$ acts in the usual
manner on $S$.    We will say that $\Phi\in S$ is
$A$-hyper\-geo\-metric of degree $\beta$ if it is annihilated
by $H_A(\beta)$, i.e.
$$L (\Phi) \ =\  0 \quad \hbox{for all}\quad L\in H_A(\beta).$$
Denote by $\theta =(\theta_1, \dots, \theta_n)$ the vector of
differential operators $\theta_i = z_i {\frac \partial {\partial z_i}}$.
Since for any $v\in\Z^{n}$ we have $(A\cdot\theta) (z^v) =
(A \cdot v) z^v$, it follows that if $\Phi\in S$ is
$A$-hyper\-geo\-metric of degree $\beta$ then it must be
$A$-homogeneous of degree
$\beta$ and, in particular, $\beta\in \Z^d$.

By \cite[Proposition~5]{pst}, if a hyper\-geo\-metric Laurent series
has a non trivial domain of convergence, then its exponents must lie
in a strictly convex cone. We make this more precise.  Let
$$M_\beta \ := \ \{v \in \Z^n : A\cdot v = \beta\}.$$
For any vector $v\in \Z^n$ we define its {\em negative support}
as:
\begin{equation}\label{negativesupport}
{\rm nsupp}(v)\ :=\ \{i\in \{1,\dots,n\} : v_i <0\},
\end{equation}
and given $I \subset \{1,\dots,n\}$, we let
$\Sigma(I,\beta) = \{v\in M_\beta : {\rm nsupp}(v) = I\}$.  We call
$\Sigma(I,\beta)$ a cell in $M_\beta$.

\begin{definition}\label{minimalsupport}
  We say that $\Sigma(I,\beta)$ is a {\em minimal cell} if
  $\Sigma(I,\beta) \not= \emptyset$ and $\Sigma(J,\beta) = \emptyset$
  for $J \subsetneq I$.

\end{definition}

Given a minimal cell $\Sigma(I) =\Sigma(I,\beta)$ we let 
\begin{equation}\label{eq:1.1}
\Phi_{\Sigma(I)}(z) := \sum_{u\in \Sigma(I,\beta)} \ (-1)^{\sum_{i\in I}u_i} \
\frac
{\prod_{i\in I}(-u_i-1)!}{\prod_{j\not\in I}(u_j)!} \
z^{u}\,.
\end{equation}

Given a non zero $w\in \R^n$, $\varepsilon >0$ and $\nu_1, \dots,
\nu_{n-d}$ a $\Z$-basis of the lattice $M$ \eqref{lattice}
satisfying $\langle w, \nu_i \rangle > 0$ for all $i =1, \dots, n-d$
we let $\UU_w \subset \C^n$ be the open set:
\begin{equation}
\label{open}
 |z^{\nu_1}| < \varepsilon\ ,\quad  \dots, \quad  |z^{\nu_{n-d}}| <
\varepsilon.
\end{equation}

The following is essentially a restatement of
Proposition~3.14.13, Theorem 3.4.14, and Corollary 3.4.15 in \cite{sst2}:

\begin{theorem}\label{laurentseries}
  Let $w\in \R^n$ be such that the collection $\Sigma_w$ of minimal
  cells $\Sigma(I,\beta)$ contained in some half-space
$$
\{v\in \R^n: \langle w,v\rangle > \lambda\}, \qquad \lambda\in \R.
$$
is non-empty. Then:

(i) For $\varepsilon$ sufficiently small the open set $\UU_w$ of the
form \eqref{open} is a common domain of convergence of all
$\Phi_{\Sigma(I)}$ \eqref{eq:1.1} with $\Sigma(I)=\Sigma(I,\beta)\in
\Sigma_w$ and

(ii) these $\Phi_{\Sigma(I)}$ are a basis of the vector space of
$A$-hyper\-geo\-metric Laurent series of degree $\beta$
convergent in $\UU_w$.
\end{theorem}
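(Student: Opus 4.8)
The plan is to prove Theorem~\ref{laurentseries} by reducing it, via the standard combinatorial-analytic machinery of Saito–Sturmfels–Takayama, to the three cited results in \cite{sst2}, checking that the hypotheses match and supplying the convergence estimates explicitly.

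First I would fix notation: write $\beta\in\Z^d$ (this is forced by $A$-homogeneity, as observed just before the theorem), and recall that each formal series $\Phi_{\Sigma(I)}$ in \eqref{eq:1.1} is, up to the combinatorial normalization of the coefficients, the Gamma-series attached to the minimal cell $\Sigma(I,\beta)$; the point of minimality is exactly that the support $\Sigma(I,\beta)$ is nonempty while all smaller negative supports are empty, which is the condition under which the $\Gamma$-series formally solves $H_A(\beta)$ (this is Proposition~3.14.13 of \cite{sst2}, or its analogue). So step one is: verify formally that each $\Phi_{\Sigma(I)}$ with $\Sigma(I,\beta)\in\Sigma_w$ lies in $S$ and is annihilated by $H_A(\beta)$, i.e.\ is an $A$-hypergeometric Laurent series. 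This is essentially bookkeeping with the toric and Euler operators against the coefficient formula in \eqref{eq:1.1}.

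Next I would address part~(i), the common domain of convergence. The hypothesis is that all the cells in $\Sigma_w$ are contained in a common half-space $\{\langle w,v\rangle>\lambda\}$. Pick a $\Z$-basis $\nu_1,\dots,\nu_{n-d}$ of $M$ with $\langle w,\nu_i\rangle>0$; this is possible after replacing $w$ by a nearby generic vector in the same open cone, and I would remark that the conclusion is unaffected by such a perturbation. Then for a fixed cell $\Sigma(I,\beta)$, the exponents $u\in\Sigma(I,\beta)$ differ from a fixed $u^0\in\Sigma(I,\beta)$ by elements of $M$, and writing $u=u^0+\sum n_i\nu_i$ one gets that $z^u=z^{u^0}\prod (z^{\nu_i})^{n_i}$; the factorial coefficients in \eqref{eq:1.1} are dominated, by Stirling, by a polynomial-in-$u$ times a fixed geometric factor, so on the polydisc \eqref{open} with $\varepsilon$ small the series converges absolutely and locally uniformly. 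The half-space hypothesis ensures that only finitely many of the relevant $I$ occur and that a single $\varepsilon$ works for all of them simultaneously, since $\langle w,u\rangle>\lambda$ bounds the combination $\sum n_i\langle w,\nu_i\rangle$ from below and keeps the exponents in the ``direction'' $w$ points. I would then cite Theorem~3.4.14 of \cite{sst2} for the precise statement that $\UU_w$ is a common domain of convergence, having checked that our $\UU_w$ is of the form used there.

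For part~(ii), linear independence and spanning: linear independence of the $\Phi_{\Sigma(I)}$ is immediate because distinct minimal cells have disjoint supports (they are distinguished by their negative support set $I$, and the exponents realizing $\mathrm{nsupp}=I$ are disjoint from those realizing $\mathrm{nsupp}=J$ for $J\ne I$), so no nontrivial linear combination can vanish. For spanning, I would invoke Corollary~3.4.15 of \cite{sst2}: any $A$-hypergeometric Laurent series of degree $\beta$ convergent in $\UU_w$ has support in a cone determined by $w$ (by \cite[Proposition~5]{pst}, cited in the text, the exponents lie in a strictly convex cone), decompose its support into cells $\Sigma(I,\beta)$, observe that the $A$-hypergeometric equations force the coefficients on each cell to be the Gamma-series coefficients up to scalar, and that the cells appearing must be minimal and must lie in $\Sigma_w$ — otherwise convergence in $\UU_w$ would fail by the argument of part~(i) run in reverse. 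I expect the main obstacle to be precisely this last point: matching our half-space/convexity setup to the ``$w$-flat'' or ``fake indicial ideal'' formalism of \cite{sst2} so that their Corollary~3.4.15 applies verbatim, and in particular showing that a convergent series cannot have support straddling a cell outside $\Sigma_w$; this requires the same Stirling estimate as in (i) together with the strict convexity of the support cone, and is where I would spend the most care rather than simply quoting.
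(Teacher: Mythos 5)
The paper gives no independent proof of this theorem: it is stated as ``essentially a restatement of Proposition~3.14.13, Theorem~3.4.14, and Corollary~3.4.15 in \cite{sst2}'', and your proposal reduces the statement to exactly those three results while sketching the standard hypothesis checks. This matches the paper's approach, and your outline of the verification (Gamma-series formalism, Stirling estimates for convergence on $\UU_w$, disjoint supports for independence, and the SST corollary for spanning) is consistent with how those cited results are proved.
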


Since an $A$-hyper\-geo\-metric series of degree $\beta$ satisfies $d$
independent homogeneity relations it may be viewed as a function of
$n-d$ variables.  To make this precise we introduce the {\em Gale
 dual} of the configuration $A$.

\begin{definition}\label{def:Gale}
  Let $\nu_1,\dots,\nu_{n-d}\in \Z^n$ be a $\Z$-basis of the lattice
  $M$ \eqref{lattice} and denote by $B$ the $n\times (n-d)$ matrix
  whose columns are the vectors $\nu_j$.  We shall also denote by $B$
  the collection of row vectors of the matrix $B$,
  $\{b_1,\dots,b_n\}\subset \Z^{n-d}$, and call it a {\em Gale dual}
  of $A$. 
\end{definition}

\begin{remark}
\label{Gale-remark}

(i) Our definition of Gale dual depends on the choice of a basis of $M$;
this amounts to an action of $\GL(n-d,\Z)$ on the configuration
$B$.

(ii) $B$ is primitive, i.e., if $\delta \in \Z^{n-d}$ has relatively
prime entries then so does $B\delta$. This follows from the fact that
if $rv\in M$ for $r\in \Z$ and $v\in\Z^n$ then $v\in M$. Equivalently,
the rows of $B$ span $\Z^{n-d}$.

(iii) The regularity condition on $A$ is equivalent to the requirement
  that
\begin{equation}\label{bregularity}
\sum_{j=1}^n b_j \ =\ 0.
\end{equation}

(iv) $A$ is not a pyramid if and only if none of the vectors $b_j$
  vanishes.
\end{remark}

Given $v\in M_\beta$, and the choice of a Gale dual $B$ we may identify
$\,M_\beta \  \cong \  \Z^{n-d}\,$ by
$u\in M_\beta \mapsto m\in \Z^{n-d}$ with
$$u = v + m_1 \nu_1 + \cdots + m_{n-d} \nu_{n-d}.$$
In particular, $u_i < 0$ if and only if $\ell_i(m) <0$, where
\begin{equation}\label{linearforms}
\ell_i(m) :=  \langle b_i,m\rangle + v_i .
\end{equation}
The linear forms in (\ref{linearforms}) define a hyperplane arrangement
oriented by the normals $b_i$ and
each minimal cell $\Sigma(I,\beta)$ corresponds to the closure of a 
certain connected components $\sigma(I)$ in the complement of
this arrangement.

Let $\Phi_{\Sigma(I)}(z)$ as in (\ref{eq:1.1}).
We can also write for $v \in M_\beta$
\begin{equation} \label{eq:1.2a}
\Phi_{\Sigma(I)}(z) =  z^{v} \sum_{ m \in \sigma(I)\cap\Z^2} \ 
\frac
{\prod_{i\in I}(-1)^{\ell_i(m)}( - \ell_i(m)-1)!}{\prod_{j\not\in I}\ell_j(m)!} \
z^{B m}\,.
\end{equation}
Setting
\begin{equation}\label{dehomog}
x_j =  z^{\nu_j},\quad j=1,\dots, n-d,
\end{equation}
 we can now rewrite, the series (\ref{eq:1.1}) in the coordinates $x$ as
$\Phi_{\Sigma(I)}(z) = z^v \varphi_{\sigma(I)}(x)$, where
\begin{equation}\label{eq:1.2}
\varphi_{\sigma(I)}(x) :=   \sum_{ m \in \sigma(I)\cap\Z^2} \
\frac
{\prod_{\ell_i(m)<0}(-1)^{\ell_i(m)}( -
  \ell_i(m)-1)!}{\prod_{\ell_j(m)>0} \ell_j(m)!} \ 
x^{m}\,.
\end{equation}
Moreover, since changing $v\in M_\beta$ only changes
(\ref{eq:1.1}) by a constant, we can assume that in order to write
(\ref{eq:1.2}) we have chosen $v\in \Sigma(I,\beta)$ and this
guarantees that $-v_i-1>0$ for $i\in I$ and $v_j\geq 0$ for
$j\not\in I$.

If $F(z)$ is an $A$-hyper\-geo\-metric function of degree
$\beta$, then $\partial_j (F) = \partial F/\partial z_j $
is $A$-hyper\-geo\-metric of degree $\beta - a_j$. In terms of the
hyperplane arrangement in $\R^{n-d}$ this has the effect changing
the hyperplane $\{ \langle b_j , \cdot \rangle + v_j\}$ to the
hyperplane $\{ \langle b_j , \cdot \rangle + v_j -1 \}$.

The cone of parameters
\begin{equation} \label{EJ}
{\mathcal E}={\mathcal E}_A
\  :=   \  \left\{\sum_{i=1}^{d+2} \lambda_i a_i\,:\,\lambda_i\in
\R,\,\lambda_i<0\right\}
\end{equation}
is called the {\em Euler-Jacobi cone of $A$}.  We note that if $\beta\in
{\mathcal E}$ then $\beta - a_j \in {\mathcal E}$ for all
$j=1,\dots,n$.   

\begin{remark}\label{ejregions}
Given a parameter $\beta$ and 
$\ell_i(x)$  as in \eqref{linearforms} then $\beta\in \EE$ if and only if there
exists a point $\alpha\in \Q^{n-d}$ such that $\ell_i(\alpha)<0$ for all
$i=1,\dots,n$.  This implies in particular that if $b_i,b_j \in B$ are
such that $b_i = -\lambda b_j$, $\lambda>0$, then:
$$\{\ell_i(x) \geq 0\} \cap \{\ell_j(x) \geq 0\} = \emptyset.$$
In particular, all minimal regions $\sigma(I)$ have recession cones of
dimension $n-d$.
\end{remark}

We also recall the following
result \cite[Corollary 4.5.13]{sst2} which we will use in the following sections:

\begin{theorem} \label{th:ejstable}
If $F$ is an $A$-hyper\-geo\-metric function of degree $\beta\in {\mathcal E}$ then, for any $j=1,\dots, n$, $\partial_j (F) =0$ if and only if $F=0$.
\end{theorem}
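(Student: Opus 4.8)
The plan is to prove Theorem~\ref{th:ejstable} (stated here as the final result) by analyzing the Laurent series expansions of $F$ guaranteed by Theorem~\ref{laurentseries}, using the special structure of the Euler-Jacobi cone $\EE$ described in Remark~\ref{ejregions}. The essential point is that for $\beta \in \EE$ the minimal cells $\Sigma(I,\beta)$ have full-dimensional recession cones, so no cancellation can make an $A$-hypergeometric series collapse under differentiation.

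First I would fix a generic $w \in \R^n$ so that, by Theorem~\ref{laurentseries}, the space of $A$-hypergeometric Laurent series of degree $\beta$ convergent in $\UU_w$ has as a basis the functions $\Phi_{\Sigma(I)}$ attached to the minimal cells in $\Sigma_w$. Any $A$-hypergeometric function $F$ of degree $\beta$ has a Laurent expansion in $\UU_w$, hence $F = \sum_{\Sigma(I)\in\Sigma_w} c_I \,\Phi_{\Sigma(I)}$ for constants $c_I$, not all zero if $F\neq 0$. Differentiating, $\partial_j F = \sum_I c_I\, \partial_j \Phi_{\Sigma(I)}$, and $\partial_j\Phi_{\Sigma(I)}$ is $A$-hypergeometric of degree $\beta - a_j$, which still lies in $\EE$ by the remark following \eqref{EJ}. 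So it suffices to show that the $\partial_j \Phi_{\Sigma(I)}$ are linearly independent — or, more cleanly, that $\partial_j$ is injective on each basis element and that distinct cells stay distinct. The cleanest route: show that for $\beta \in \EE$ and any minimal cell $\Sigma(I,\beta)$, the leading (or any single well-chosen) exponent of $\Phi_{\Sigma(I)}$ does not disappear and keeps the cells separated after applying $\partial_j$.

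The key structural input is Remark~\ref{ejregions}: because $\beta\in\EE$, every minimal region $\sigma(I)$ in the hyperplane arrangement \eqref{linearforms} has recession cone of full dimension $n-d$. Passing to the $x$-coordinates \eqref{dehomog}, this means the support of $\varphi_{\sigma(I)}(x)$ in $\Z^{n-d}$ contains a translate of a full-dimensional cone. I would then argue that $\partial_j$, which in the arrangement picture merely shifts the $j$-th hyperplane from $\{\ell_j = 0\}$ to $\{\ell_j = -1\}$, sends $\sigma(I)$ to a nonempty region with the same full-dimensional recession cone, and that the coefficient of any interior lattice point of that recession cone is a nonzero product of factorials — hence no cancellation occurs and $\partial_j\Phi_{\Sigma(I)}\neq 0$. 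To handle the combination $F=\sum c_I\Phi_{\Sigma(I)}$, I would use that distinct minimal cells correspond to regions $\sigma(I)$ whose interiors are disjoint; a point deep in the recession cone of one region but outside all others gives a term in $\partial_j F$ that sees only one $c_I$, forcing $c_I=0$ whenever that cell survives differentiation in direction $j$, and one checks every cell survives because the recession cone is unchanged. This yields: $\partial_j F = 0 \Rightarrow$ all $c_I = 0 \Rightarrow F = 0$. The converse is trivial.

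The main obstacle I anticipate is the bookkeeping around which minimal cells of degree $\beta$ correspond to which minimal cells of degree $\beta - a_j$, and ensuring the recession-cone and non-cancellation arguments are uniform across all cells in $\Sigma_w$ simultaneously (rather than cell-by-cell, which would not immediately control the sum $\sum c_I \Phi_{\Sigma(I)}$). One has to be careful that shifting $\{\ell_j=0\}$ to $\{\ell_j=-1\}$ does not merge two previously distinct regions or annihilate a region entirely — but full-dimensionality of the recession cones, which is exactly what $\beta\in\EE$ buys us via Remark~\ref{ejregions}, prevents a region from being swallowed. I would expect the honest version of this to invoke the precise statements of \cite[Corollary~4.5.13]{sst2} and its surrounding apparatus rather than re-deriving the combinatorics from scratch; in a self-contained write-up, the subtle step is verifying that the map on the index set of minimal cells induced by $\partial_j$ is a bijection onto the minimal cells of degree $\beta - a_j$, so that linear independence is preserved and injectivity of $\partial_j$ on the full solution space follows.
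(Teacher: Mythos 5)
First, a point of comparison: the paper does not prove Theorem~\ref{th:ejstable} at all. It is recalled verbatim from \cite[Corollary~4.5.13]{sst2}, where it is established by $D$-module-theoretic means (contiguity operators and $b$-functions), not by manipulating series. So there is no internal proof to match your argument against; the question is whether your argument stands on its own.

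It does not, for one concrete reason: your opening step, writing $F=\sum_I c_I\,\Phi_{\Sigma(I)}$, is not licensed by Theorem~\ref{laurentseries}. That theorem only says the $\Phi_{\Sigma(I)}$ form a basis of the space of $A$-hypergeometric \emph{Laurent series} of degree $\beta$ convergent in $\UU_w$ --- not of the full local solution space --- whereas the statement to be proved concerns an arbitrary $A$-hypergeometric function $F$. There are two failure modes. (a) The cone $\EE$ is a real cone; for $\beta\in\EE\setminus\Z^d$ the set $M_\beta$ is empty, so there are no Laurent series solutions whatsoever, even though the holonomic system still has a solution space of dimension equal to its rank on which injectivity of $\partial_j$ must be proved. (b) Even for $\beta\in\EE\cap\Z^d$, such an integral parameter is typically resonant, and the local solution space then contains logarithmic solutions that are not in the span of the $\Phi_{\Sigma(I)}$; your expansion simply does not reach them. (There is also the lesser issue that $F$ is given only on some open set $U$, and continuing it into $\UU_w$ is a multivalued operation.) What your argument does correctly establish --- the supports $\sigma(I)$ are pairwise disjoint, each has a full-dimensional recession cone by Remark~\ref{ejregions}, the coefficients are nonzero products of factorials, and $\partial_j$ kills only the terms lying on the single hyperplane $\ell_j=0$, so each $\partial_j\Phi_{\Sigma(I)}$ is nonzero and the family stays linearly independent --- is precisely the special case of the theorem for convergent Laurent series solutions. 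That special case happens to be all the paper ever uses, but it is not the theorem as stated, and closing the gap genuinely requires the $D$-module machinery of \cite{sst2} rather than more careful bookkeeping of minimal cells.
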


In particular, all non-zero $A$-hyper\-geo\-metric functions whose degree
lies in the Euler-Jacobi cone are stable.

\begin{example}\label{ex:running}
Let $A\in\Z^{3\times 5}$ be the configuration
\begin{equation}\label{example:configA}
A\  = \  \left(
\begin{array}{ccccc}
1 & 1 & 0 & 0 & 0\\
0 & 0 & 1 & 1 & 1\\
0 & 1 & 0 & 2 & 1
\end{array}
\right)
\end{equation}
$A$ is an essential Cayley configuration of two dimension one configurations:
$A_1=\{0,1\}, A_2=\{0,2,1\}$.
For $\beta\in\C^3$, the ideal  $H_A(\beta)$ is generated by $\pd 1 \pd 4-\pd 2 \pd 5$,
$\pd 3 \pd 4  -  \pd 5^2$,
$\pd 1  \pd 5 - \pd 2  \pd 3$ together with the three Euler operators.  One may verify by direct computation that the function
\begin{equation}\label{example:sol}
F(z) = \frac{z_2}{\,z_1z_2z_5 - z_2^2 z_3 - z_1^2 z_4\,}
\end{equation}
is $A$-hyper\-geo\-metric of degree $\beta = (-1,-1,-1)^t$.
The denominator of $F$ is the discriminant $D_A$ which agrees with the
classical univariate resultant of the polynomials:
\begin{equation}\label{example:pols}
f_1(t) := z_1 + z_2t;\quad f_2(t) := z_3 + z_4t^2 + z_5t.
\end{equation}

A Gale dual of $A$ is given by the matrix:
\begin{equation}\label{example:configB}
B\  = \  \left(
\begin{array}{rr}
-1 & 1 \\
1 & -1 \\
1 & 0 \\
0 & 1 \\
-1 & -1
\end{array}
\right)
\end{equation}
Let $v = (-1,0,0,0,-1)^t$.  Then $A\cdot v = \beta$ and with respect to the inhomogeneous variables:
$$x_1 \,=\,\frac{z_2z_3}{z_1z_5};\quad x_2 \,=\,\frac{z_1z_4}{z_2z_5}$$
we have
$$z^{-v} \, F(z) \,=\, {z_1z_5}\,F(z) \,=\, \frac{1}{1-x_1-x_2}.$$

The hyperplane arrangement associated with $(B,v)$ is defined by the five half-spaces $\ell_i(x) \geq 0$, where  $\ell_1(x) = x_2 - x_1 -1$, $\ell_2(x) = x_1 - x_2 $,
$\ell_3(x) = x_1$, $\ell_4(x) = x_2$, $\ell_5(x) = -x_1 - x_2 -1$.
There are $4$ minimal cells in $M_\beta$, depicted in Figure~\ref{fig:arrangement}.
They are all two-dimensional and correspond to
the negative supports: $I_1 = \{1,5\}$,
$I_2 = \{2,5\}$,
$I_3 = \{2,3\}$, $I_4 = \{1,4\}$.  

\begin{figure}\label{fig:arrangement}
\begin{center}
 {}{}\scalebox{0.60}{\includegraphics{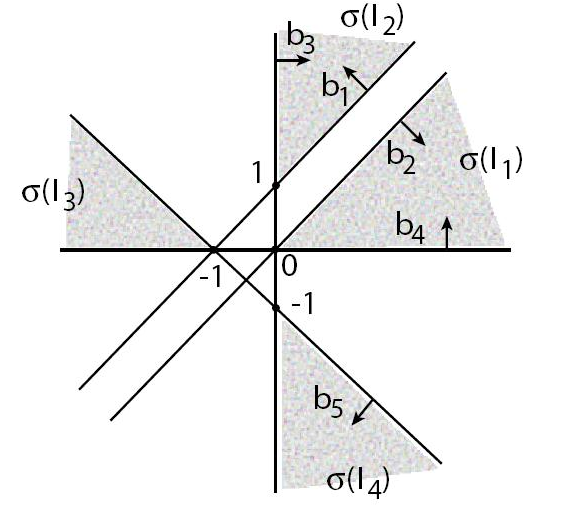}}
 \caption{The hyperplane arrangement corresponding to Example~\ref{ex:running}}
 \end{center}
 \end{figure}

The expansion of $F(z)$ (cf. \eqref{example:sol}) from the vertex
corresponding to $z_1z_2z_5$ in the Newton polytope of the denominator of
$F$ gives:
\begin{eqnarray*}
z_1 z_5 \,F(z)  &=& \sum_{m\in\N^2} \frac {(m_1 + m_2)!}{m_1!\, m_2!}
x_1^{m_1} x_2^{m_2}\\
&=& \sum_{m_1\geq m_2 } \frac {(m_1 + m_2)!}{m_1!\, m_2!}
x_1^{m_1} x_2^{m_2} \ + \sum_{m_2> m_1 } \frac {(m_1 + m_2)!}{m_1!\, m_2!}
x_1^{m_1} x_2^{m_2}\\
&=& \varphi_{\sigma(I_1)} (x) - \varphi_{\sigma(I_2)} (x)\\
\end{eqnarray*}
Similarly, the series $\varphi_{\sigma(I_3)} (x) $ and $ \varphi_{\sigma(I_4)} (x)$ correspond to the expansions from the other two vertices of the Newton
polytope of the denominator of $F$.
\end{example}

\section{Classification of codimension two gkz-rational configurations} \label{sec:classification}

In this section we prove Theorem~\ref{th:conj} classifying all
codimension-two configurations $A$ admitting stable rational
$A$-hyper\-geo\-metric functions.  In particular we obtain a
description of all gkz-rational configurations $A$ of codimension two.

\begin{proof}[Proof of Theorem~\ref{th:conj}]
  As it has already been pointed out, it is shown in \cite{rhf} that
  an essential Cayley configuration is gkz-rational and, therefore,
  admits stable rational $A$-hyper\-geo\-metric functions.  Moreover,
  it follows from \cite{binom} that codimension-two Lawrence
  configurations, while not being gkz-rational, nevertheless admit stable
  rational $A$-hyper\-geo\-metric functions.  Thus we need to consider
  the converse statement; that is, which codimension-two
  configurations admit stable, rational, $A$-hyper\-geo\-metric
  functions.

Suppose then that
$F(z) = P(z)/Q(z)$ is a stable, rational $A$-hyper\-geo\-metric
 function.   $P$ and $Q$ are $A$-homogeneous polynomial and
 consequently, the Newton polytope $\NN(Q)$ of $Q$ lies in a
 translate of $\ker_\R A$.  We choose a vertex $v_Q$ of $\NN(Q)$ and
 a $\Z$-basis $\nu_1,\nu_2$ of $M = \ker_\Z A$ such that
$$\NN(Q) \subset v_Q + \R_{\ge 0}\cdot \nu_1  +  \R_{\ge 0}\cdot \nu_2.$$
Define  $x_i$ as in \eqref{dehomog}, $i=1,2$, and let
$v_P$ be an exponent occurring in $P$.   Then $F$ has $A$-homogeneity $A\cdot(v_P- v_Q)$ and
it has a power series expansion supported in a translate of the cone
$$\CC \ :=\ \R_{\ge 0}\cdot \nu_1  +  \R_{\ge 0}\cdot \nu_2.$$

The basis $\nu_1, \nu_2$ gives rise to a Gale dual $B$ of $A$ as in
Section~\ref{sec:Laurent} and we may choose $v = v_P - v_Q$ to
identify $M_\beta \cong M$.  We can dehomogenize $F$ to get a bivariate rational
function $f(x_1,x_2)=p(x_1,x_2)/q(x_1,x_2)$ which verifies
$$ F(z) \, =\, z^{v_P - v_Q}\ f(x_1, x_2).$$
It follows from Theorem~\ref{laurentseries} that, without loss of
generality,
$$
f(x)=\varphi_{\sigma(I_1)}+c_2\varphi_{\sigma(I_2)}\cdots,
$$
 where
$\sigma(I_1),\sigma(I_2),\ldots$ are minimal cells of the oriented
line arrangement defined by $(B,v)$ contained in the first
quadrant.

Since $F$ is stable, no derivative of $F$ vanishes and, after
appropriate differentiation, we may assume that the degree $\beta$
lies in the Euler-Jacobi cone $\EE$ and, consequently, that there are
no bounded minimal cells of degree $\beta$.  We can also suppose that
$\sigma(I_1)$ is a two-dimensional pointed cone with integral vertex,
which we may assume to be the origin.

For each $\delta = (\delta_1,\delta_2)\in \sigma(I_1)$ with
$\gcd(\delta_1,\delta_2)=1$, the $\delta$-diagonal $f_\delta(t)$ of
$f$ is algebraic.  On the other hand, $f_\delta(t) =
(\varphi_{\sigma(I_1)})_\delta(t)$ and therefore, it follows from
(\ref{eq:1.2}) that:
\begin{equation}\label{eq:diagonal}
 f_\delta(t) =  \pm\, \sum_{ r \geq 0} \
 \frac
 {\prod_{i\in I}( - \langle b_i,\delta\rangle\,r
  -v_i-1)!}{\prod_{j\not\in I}( \langle b_j,\delta\rangle\,r )!} \
 ((-1)^c t)^{r}\,,
\end{equation}
where $c= \langle \sum_{i\in I} b_i ,\delta\rangle$.
Now, according to Theorem~\ref{notalgebraic}, for all $\delta
\in \sigma(I_1)$, the series
\begin{equation}\label{eq:diagonal_central}
  g_\delta(t) =   \sum_{ r \geq 0} \
  \frac
  {\prod_{i\in I}( - \langle b_i,\delta\rangle\,r )!} {\prod_{j\not\in
      I}( \langle b_j,\delta\rangle\,r )!} \   
  t^{r}\,
\end{equation}
is an algebraic function.  We note that, after cancellation, the
coefficients of the series (\ref{eq:diagonal_central}) no longer
involve terms coming from pairs $b_i$, $b_j$ such that $b_i = -b_j$.
We denote by $\tilde B \subset \R^2$ the configuration obtained by
removing all such pairs as well as any zero vector and call it the {\em
  reduced configuration of $B$}.

If $\tilde B = \emptyset$ then $A$ is clearly a Lawrence
configuration.  Next we show that if $A$ admits a stable, rational
hyper\-geo\-metric function then $\tilde B$ cannot be a one-dimensional
vector configuration.  

Indeed, suppose $\tilde B$ is one dimensional, say, $\tilde B
\subseteq \langle \gamma\rangle \subseteq \Z^2$.  Since $f$ is stable,
the Newton polytope $\NN(q)$ is a two-dimensional polytope.  Let $\nu$
be one of its vertices and let $\mu_1$, $\mu_2$ be the adjacent edges.
We may assume without loss of generality that, say, $\mu_1$ is not
orthogonal to $\gamma$.  Consequently, it follows from  Lemma~\ref{supportcone} that the expansion of $f$ from the vertex
$\nu$ contains infinitely many non-zero terms whose exponents lie in a
ray with direction vector $\mu_1$.  The restriction $u$ of $f(x)$ to
such a ray is the specialization of a suitable derivative of $f$ and
hence a one-variable rational function.  On the other hand, $u$ is as
in the hypothesis of Theorem~\ref{notalgebraic} with the $p_i$'s and
$q_i$'s of the form $\langle b, \mu_1\rangle$ for some $b\in \tilde
B$. By (ii) of the Theorem these must cancel out in pairs but then
since $\tilde B$ is one-dimensional, $\tilde B = \emptyset$ which is a
contradiction.

Let 
$$
B_1:= \left(
\begin{array}{rr}
1& 1 \\
-1&0\\
0&-1\\
\end{array}
\right) 
\qquad
B_2:= \left(
\begin{array}{rr}
2& 0 \\
0&2\\
-1&0\\
0&-1\\
-1&-1\\
\end{array}
\right) 
\qquad
B_3:= \left(
\begin{array}{rr}
2& 2 \\
0&1\\
-1&-1\\
-1&0\\
0&-2\\
\end{array}
\right) \,.
$$
By construction of $\tilde B$ there exists infinitely many $\delta\in
\N^2$ with relatively prime entries such that $\tilde B\delta$ has no
zero coordinate and no two distinct coordinates adding up to zero. For
such a $\delta$ there is no cancellation of the factorials in the
coefficients when we take the  $\delta$-diagonal of $f$. Therefore,
since $\tilde B$ has rank two, by the classification of algebraic
hyper\-geo\-metric series in one variable (see
\S\ref{sec:univariate}), there exists two pairs of linearly
independent vectors $\delta,\delta'\in\N^2$ and $\nu,\nu'\in\N^2$ such
that $\tilde B\delta=B_i\nu$ and $\tilde B\delta'=B_i\nu'$ for
some~$i=1,2,3$. In other words, there exists $U\in \GL_2(\Q)$ such
that
$$
\tilde B = B_iU,
$$
for some $i$. In fact, since $B_i$ is primitive, $U\in \Z^{2\times
  2}$.

Now with the notation of the previous paragraph $f$ restricted to the
rays $\mu_1$ and $\mu_2$ is rational. By inspection we see that if
$i=2,3$ there are no two vectors in $\Z^2$ which give restrictions
compatible with Theorem~\ref{notalgebraic} (ii). (i.e., such that the
coordinates of $\tilde B \mu_i$ are of the form $(0,a,-a,b,-b)^t$ for
some $a,b\in\N$ up to permutation.) Therefore $i=1$ and it is now easy
to check that necessarily $A$ is affinely equivalent to an essential
Cayley configuration.  This concludes the proof of
Theorem~\ref{th:conj}.
\end{proof}

\begin{remark}
The simplest series with associated matrix $B_2$ 
$$
u_2(x,y):=\sum_{m,n\geq 0} \frac{(2m)!(2n)!}{m!n!(m+n)!}\,x^my^n
$$
was considered by Catalan. It is an algebraic function, in fact, 
$$
u_2(x,y)=\frac 1{x+y-4xy}\left(\frac x{\sqrt{1-4x}}+\frac
  y{\sqrt{1-4y}}\right)
$$
(see \cite{gessell} for an appearance of this series in
combinatorics).

Similarly, the series
$$
u_3(x,y):=\sum_{m,n\geq 0} \frac{(2m+2n)!n!}{m!(2n)!(m+n)!}\,x^my^n
$$
with associated matrix $B_3$ is algebraic, in fact,
$$
u_3(x,y)=\frac 1{x+4y-xy}\left(\frac x{\sqrt{1-4x}}+\frac
  y{1-y}\right).
$$
(The quickest way to prove these identities is to use a recursion for
the coefficients. For $u_2$ for example 
$$
4A(m+1,n+1)=A(m+1,n)+A(m,n+1),
$$
where $A(m,n):=(2m)!(2n)!/((m+n)!m!n!)$, and
$u_2(t,0)=u_2(0,t)=1/\sqrt{1-4t}$.) 

Note that in addition both $u_2$ and $u_3$ satisfy that all of their
$\delta$-diagonals are algebraic. This is not the typical case for two
variable algebraic functions.

For $B_1$ the natural series is
$$
u_1(x,y):=\sum_{m,n\geq 0} \frac{(m+n)!}{m!n!}\,x^my^n
$$
which is of course rational $u_1(x,y)=1/(1-x-y)$.
\end{remark}

\begin{example}
In \cite[Theorem~4.1]{rhf} it was necessary to show that a
bivariate series of the form:
\begin{equation}\label{series-rhf}
f(x_1,x_2) = \sum_{m\in \N^2}
\frac {(p(m_1+m_2)+k_1)! (q(m_1+m_2)+k_2)!}
{(m_1p)!(m_1q)!(m_2p)!(m_2q)!} \, x_1^m
x_2^n\,,
\end{equation}
where $p,q$ are relatively prime positive integers and $k_1,k_2\in \N$,
does not define a rational function.  While the proof presented in
\cite{rhf} is incomplete, the result is clearly
an immediate consequence
of Theorem~\ref{th:conj}.  Indeed, the  $(1,1)$ univariate diagonal series
$$ \sum_{m\geq 0}
\frac {(2pm+k_1)! (2qm + k_2)!}
{(pm)!^2(qm)!^2} \, z^m
$$
should be algebraic but, then by
 Theorem~\ref{notalgebraic}, so should the central series
$$\ \sum_{m,n\geq 0}
\frac {(2pm)! (2qm)!}
{(mp)!^2(mq)!^2} \, z^m .
$$
 However, this is impossible by \cite[Theorem~1]{frv} since
the height of the series is $2$.
We should point out that the argument in \cite{rhf} was based on a
correct proof for a similar case due to Laura Matusevich.  The gap
appears in adapting that proof to the series \eqref{series-rhf}.
\end{example}

\section{Toric residues and hyper\-geo\-metric functions}\label{sec:residues}

The purpose of this section is to describe all stable
$A$-hyper\-geo\-metric functions in the case of codimension-two
configurations.  By Theorem~\ref{th:conj} we may assume that $A$ is
either a Lawrence configuration or an essential Cayley configuration.
The first case has been studied, for arbitrary codimension, in
\cite{binom}.  In particular, if $A$ is a codimension-two Lawrence
configuration then $A$ is a Cayley configuration of $r+2$ two-point
configurations in $\Z^r$ and it follows from \cite[Theorem~1.1]{binom}
that the dimension of the space of stable $A$-hyper\-geo\-metric
functions is $r+1$ and that they may be represented by appropriate
multidimensional residues.  We refer the reader to \cite{binom} for
details.

Thus, we will restrict ourselves to the case of essential Cayley
configurations.  We begin by recalling the construction of rational
hyper\-geo\-metric functions associated with any essential Cayley
configuration by means of multivariate toric residues (we refer to
\cite{compo, mega96, rhf, binom, cox} for details and proofs) and will
then show in Theorem~\ref{th:dim1} that, in the codimension-two case,
a suitable derivative of any stable rational hyper\-geo\-metric
function must be a toric residue.  In particular, if $\beta\in \EE$,
the dimension of the space of rational $A$-hyper\-geo\-metric
functions is equal to $1$.

Let
\begin{equation*}
A \,\,\, =   \,\, \,
\{e_1\} \! \times A_1 \! \, \, \cup \,\,
\cdots \, \, \cup \,\,
\{e_{r+1}\} \! \times \! A_{r+1}
\,\, \subset \,\, \Z^{r+1} \times \Z^r
\end{equation*}
be an essential Cayley configuration.  For each $A_i \subset \Z^r$
consider the generic Laurent polynomial $f_i$ supported in $A_i$, that
is:
$$
f_i(t) \ =\ \sum_{\alpha \in A_i} \ u_{i\alpha} t^\alpha \ ;\quad
t=(t_1,\dots,t_r).
$$

We set $D_i = \{t\in (\C^*)^r : f_i(t) = 0\}$.  Generically on the
coefficients $u_{i\alpha}$, given any $i=1,\dots,r+1$, the $r$-fold
intersection
$$
V_i \ :=\ D_1 \cap \cdots \cap \widehat{D_i} \cap
\cdots \cap D_{r+1}
$$
is finite and, given any Laurent monomial $t^a$, $a\in \Z^r$, we can
consider the {\em global residue}:

\begin{eqnarray}
\label{globalres}
 R_i(a) & :=  & \sum_{\xi \in V_i}\, {\rm Res}_\xi \left(\frac {t^a/f_i}{f_1\cdots\widehat{f_i}\cdots f_{r+1}} \,\frac{dt_1}{t_1}\wedge \cdots \wedge \frac{dt_r}{t_r}\right)\\
 & =  &
 \label{intres}
 \frac{1}{(2\pi i)^r} \int_\Gamma \frac{t^a}{f_1\cdots f_{r+1}} \,\frac{dt_1}{t_1}\wedge \cdots \wedge \frac{dt_r}{t_r}\,,
\end{eqnarray}
where ${\rm Res}_\xi$ denotes the local Grothendieck residue (see \cite{ghbook,tsikh}) and $\Gamma$ is an appropriate real $r$-cycle on the
torus $(\C^*)^r$.

It is shown in \cite[Theorem~4.12]{compo} that if $a$ lies in the
interior of the Minkowski sum of the convex hulls of $A_1, \dots,
A_{r+1}$ then the expression $(-1)^i R_i(a)$ is independent of $i$.
Its common value is the {\em toric residue} $R(a)$ studied in
\cite{cox,compo}.

It is often useful to consider the expression obtained by replacing in
(\ref{globalres}) the polynomial $f_j$ by $f_j^{c_j}$, where $c_j$ is
a positive integer.  This change defines a function $R_i(c,a), c\in
\Z_{>0}^{r+1}$, and if $a$ lies in the interior of the Minkowski sum
of the convex hulls of $c_1\,A_1, \dots, c_{r+1}\,A_{r+1}$ then the
expression $(-1)^i R_i(c,a)$ is independent of $i$.

The toric residue $R(c,a)$ is a rational function on the coefficients
$u_{i\alpha}$ and is $A$-hyper\-geo\-metric of degree $\beta = (-c,-a)
\in \Z^{r+1}\times Z^r$.  This may be seen, for example, by
differentiating under the integral sign in the expression
(\ref{intres}). We refer to \cite[Theorem~7]{mega96} for details. 
 
It follows from the arguments in \cite[\S 5]{rhf}, that the function
$R(c,a)$ does not vanish and, since for a given 
$c\in \Z^{r+1}_{>0}$,
a point $a\in \Z^r$ is in the interior of the Minkowski sum of
$c_1\,A_1, \dots, c_{r+1}\,A_{r+1}$ if and only if
$(-c,-a)$ lies in the Euler-Jacobi cone $\EE$  (\ref{EJ}), it
follows that $R(c,a)$ is a stable rational $A$-hyper\-geo\-metric function in this case.

We note that
\begin{equation}\label{derivres}
  \frac {\partial R(c,a) }{\partial u_{i\alpha}} = -c_i\,R(c+e_i,a+\alpha),
\end{equation}
where $e_i$ denotes the $i$-th vector in the standard basis of $\Z^{r+1}$.

The family of Laurent polynomials $f_1,\dots,f_{r+1}$ associated with
a codimension-two Cayley essential configuration must consist of $r$
binomials and one trinomial.  Thus, after relabeling the coefficients
and an affine transformation of the exponents we may assume that
\begin{eqnarray} \label{cayley7}
f_i \quad =   & z_{2i-1} \, + \, z_{2i}\cdot t_1^{\gamma_i}\ ;\qquad i=1,\dots,r
\nonumber\\
f_{r+1} \quad =   & z_{2r+1} \, + \,
z_{2r+2} \cdot  t^{\alpha_1} \,+ \,
z_{2r+3} \cdot  t^{\alpha_2} ,\nonumber
\end{eqnarray}
with $\gamma_1, \dots, \gamma_r,\alpha_1,\alpha_2 \in \Z^r\backslash\{0\}$.  

\begin{theorem}\label{th:dim1}
  Let $A \subset \Z^{2r+1}$ be a codimension-two Cayley essential
  configuration and suppose $\beta=(-c,-a) \in \EE \cap \Z^2$.  Then, any
  rational $A$-hyper\-geo\-metric function of degree $\beta$ is a
  multiple of $R(c,a)$.
\end{theorem}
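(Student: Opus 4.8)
The plan is to show that the space of rational $A$-hypergeometric functions of degree $\beta = (-c,-a) \in \EE$ is at most one-dimensional; since $R(c,a)$ is a nonzero element of this space by the discussion preceding the theorem, this will force every such function to be a scalar multiple of $R(c,a)$. First I would reduce, via $A$-homogeneity and the Gale-dual description of Section~\ref{sec:Laurent}, to studying bivariate rational functions: an $A$-hypergeometric function $F$ of degree $\beta$ dehomogenizes to a rational function $f(x_1,x_2) = p/q$ whose Laurent expansion, by Theorem~\ref{laurentseries}, is a $\C$-linear combination $\sum_k c_k \varphi_{\sigma(I_k)}$ of the series attached to minimal cells contained in a common half-space. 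Because $\beta\in \EE$, Remark~\ref{ejregions} tells us there are no bounded minimal cells and every minimal region has full-dimensional recession cone, which rigidly constrains the geometry of the line arrangement defined by $(B,v)$.

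The heart of the argument is to pin down the Newton polytope $\NN(q)$ and the coefficients $c_k$. For a codimension-two Cayley essential configuration the Gale dual $B$ has an explicit form dictated by \eqref{cayley7}: the $r$ binomials contribute pairs of opposite rows up to the trinomial part, so the reduced configuration $\tilde B$ is (affinely) the matrix $B_1$ appearing in the proof of Theorem~\ref{th:conj}. Tracking through the identification $\varphi_{\sigma(I_k)}$ and the dehomogenization, I would show that each minimal region contributing to $f$ corresponds to expansion of $f$ from a vertex of $\NN(q)$, exactly as in Example~\ref{ex:running}, and that $\NN(q)$ is forced to be (a dilate/translate of) the Newton polytope of the discriminant $D_A$ — equivalently, the resultant of the binomials and the trinomial. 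The key mechanism is that the denominator $q$ of any rational $A$-hypergeometric function of degree $\beta\in\EE$ must vanish on the singular locus in a controlled way; since the only relevant irreducible factor is $D_A$ (the facial discriminants being monomials for this configuration), $q$ is a power of $D_A$ up to a monomial, and the $A$-homogeneity degree $\beta$ then fixes that power. Once $\NN(q)$ is determined, the leading behavior of the expansions $\varphi_{\sigma(I_k)}$ from the various vertices are linearly independent, while the relations among the $c_k$ forced by the requirement that $\sum_k c_k\varphi_{\sigma(I_k)}$ genuinely comes from a single rational function (its diagonals must match up, cf.\ Proposition~\ref{diag} applied in reverse along the vertices) leave only one free parameter — the overall scalar.

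I expect the main obstacle to be the bookkeeping that identifies $\NN(q)$ precisely and shows that the coefficients $c_k$ are determined up to a single common scalar: one must rule out the a priori possibility that two "independent-looking" combinations of the cell series both sum to rational functions of the same degree. The cleanest route I foresee is to argue that any rational $F$ of degree $\beta\in\EE$ has $\partial_{u_{i\alpha}} F$ rational of degree $\beta - e_i - (\text{something})$, still in $\EE$ by the remark after \eqref{EJ}, and to combine this with \eqref{derivres} and Theorem~\ref{th:ejstable} (stability: no derivative kills a nonzero function) to set up a descent/uniqueness argument: if $F_1,F_2$ are two such functions, a suitable combination $\lambda_1 F_1 + \lambda_2 F_2$ has a zero derivative, hence is itself zero, forcing proportionality. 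Making the "suitable combination" explicit — i.e.\ showing the span of the derivatives $\{\partial_{u_{i\alpha}}\}$ acts on the (finite-dimensional, by the Laurent-series classification) space of degree-$\beta$ rational solutions with enough transitivity to collapse it to a line — is the technical crux, and is where I would spend most of the effort.
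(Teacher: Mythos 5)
Your overall strategy --- show that the space of rational solutions of degree $\beta\in\EE$ is at most one-dimensional and then quote the nonvanishing of $R(c,a)$ --- is indeed the paper's strategy, and you correctly invoke the dehomogenization, Theorem~\ref{laurentseries}, Remark~\ref{ejregions} and the diagonal machinery. But both of your key steps contain genuine gaps. First, the identification of the denominator: you assert that $q$ is a power of $D_A$ up to a monomial because ``the only relevant irreducible factor is $D_A$ (the facial discriminants being monomials for this configuration).'' That premise is false. For a Cayley essential configuration the facial subsets $\{e_i\}\times A_i$ can have non-monomial discriminants; in Example~\ref{ex:running} the face carrying the trinomial $f_2=z_3+z_4t^2+z_5t$ contributes the factor $z_5^2-4z_3z_4$ to the principal $A$-determinant. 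So knowing that the poles of $F$ lie in the singular locus does not pin down $q$, and this step is exactly what needs proof. The paper establishes only the weaker (but sufficient) claim that $\NN(q)$ is a triangle with inward normals $b_{2r+1},b_{2r+2},b_{2r+3}$, and does so not by factoring $q$ but by combining Lemma~\ref{supportcone} (infinitely many nonzero terms along each edge direction $\mu_i$ of the expansion cone at a vertex of $\NN(q)$) with Theorem~\ref{notalgebraic}(ii) applied to the restriction of $f$ to such a ray, which forces $\langle b_{2r+j},\mu_i\rangle=0$ for some $j$ and hence makes $b_{2r+j}$ an inward normal.

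Second, the uniqueness step is not actually carried out. Your proposed descent --- find $\lambda_1F_1+\lambda_2F_2$ with a vanishing partial derivative and invoke Theorem~\ref{th:ejstable} --- is not an argument as it stands: there is no reason a nontrivial combination of two independent solutions should have a vanishing derivative, and you concede this is where all the work would go. The paper's mechanism is a concrete injective linear functional: expand any nonzero rational $F$ of degree $\beta$ from the vertex of $\NN(q)$ cut out by the edges normal to $b_{2r+1}$ and $b_{2r+2}$; Lemma~\ref{supportcone} then forces the coefficient $a_\sigma$ of the minimal region $\sigma$ whose recession cone has a boundary ray orthogonal to $b_{2r+1}$ to be nonzero, so $F\mapsto a_\sigma$ has trivial kernel and the solution space is a line. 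Without this (or an equivalent) device your proof does not close.
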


\begin{proof}  
  The Gale dual $B$ of a codimension-two Cayley essential
  configuration is a collection of $2r+3$ vectors
  $b_1,\dots,b_{2r+3}\in \Z^2$ which, after renumbering, may be
  assumed to be of the form:
$$
b_1+b_2\ = \ \dots \ = \ b_{2r-1}+b_{2r}\ =\ b_{2r+1}+b_{2r+2}
+b_{2r+3} \ =\ 0,
$$
where the vectors $b_{2r+1}, b_{2r+2}, b_{2r+3}$ are not
collinear. 


As in \eqref{linearforms} we denote
by $\ell_i(x) = \langle b_i,x\rangle + v_i$ the linear functionals in $\R^2$ defined by $B$ and a choice of $v\in \Z^{2r+3}$ such that
$A\cdot v = \beta$.  For $1\leq i < j \leq 3$, we set
\begin{equation}\label{eq:bigregions}
\Lambda_{ij} \,:=\,\{x\in \R^2 : \ell_{2r+i}(x) \geq 0,\  \ell_{2r+j}(x) \geq 0\}.
\end{equation}

Let $F(z) = P(z)/Q(z)$ be any non-zero (and hence, stable) $A$-hyper\-geo\-metric function of degree
$\beta$ and write
$z^{-v}\cdot F(z) = f(x)=p(x)/q(x)$, where $x = (x_1,x_2)$ is as in \eqref{dehomog}. 
 
We claim that the Newton polytope $\NN(q)$ of the polynomial
$q(x)$ is a triangle whose inward pointing normals are the vectors $b_{2r+1}$, 
$b_{2r+2}$, $b_{2r+3}$ (this is indeed the case for the residue
$R(c,a)$ since its denominator is a power of the discriminant $D_A$, whose Newton polytope
is such a triangle  by \cite{elim}).  Let $\nu_0$ be a vertex of $\NN(q)$ and
$\nu_1,\nu_2$ the adjacent vertices.  Set $\mu_i = \nu_i - \nu_0$, $i=1,2$.   
The Laurent expansion of  $f(x)$ from the vertex $\nu_0$ is supported in a cone of the form 
$$\CC = c_0 +\R_{\geq 0}\cdot \mu_1 + \R_{\geq 0}\cdot \mu_2\ ;\quad c_0\in\Z^2\, .
$$
On the other hand, since $f(x)$ is the dehomogenization of an $A$-hyper\-geo\-metric function, it follows from Theorem~\ref{laurentseries} that $f$ may be
written as 
\begin{equation}\label{eq:expansion}
f \,=\, \sum_\sigma \alpha_\sigma \,\varphi_\sigma\,,
\end{equation}
where $\sigma$ runs over all minimal regions of the hyperplane arrangement
defined by the linear functionals $\ell_i(x)$ which are contained in the cone $\CC$, and $\alpha_\sigma \in \C$.  

Let $\sigma$ be any region appearing in \eqref{eq:expansion} with a non-zero coefficient.  
Since $\sigma$ is minimal region, all linear forms $\ell_i$ have a constant sign in
its interior. As noted in Remark~\ref{ejregions}, $\sigma$ must have a two-dimensional recession cone.  
Let $\delta$ be a rational direction in the interior of 
$\sigma$ and consider the $\delta$-diagonal
of $\varphi_\sigma$.  By Proposition~\ref{diag}, the function 
$(\varphi_\sigma)_\delta(t)$ must be algebraic. 
As the series $\varphi_\sigma$ has the form~\eqref{eq:sigma} below, it follows from the
discussion in Section~\ref{sec:univariate} that this can only happen if two of the
linear forms $\ell_{2r+j}, j=1,2,3$, are positive (and the third one negative) over the interior
of $\sigma$. This proves that $\sigma$ is contained in one of the regions $\Lambda_{ij}$. 

Now, it follows from Lemma~\ref{supportcone} 
that there must be minimal regions $\sigma_1, \sigma_2\subset \CC$, not necessarily distinct, appearing with non-zero coefficients in the expansion \eqref{eq:expansion} such that $\sigma_i$ contains  all points of the form
$c_i + k \mu_i$, $i=1,2$, for suitable $c_1,c_2\in \Z^2$ and $k\in\Z_{>0}$ 
sufficiently large.

Consider the series $\varphi_{\sigma_1}(x)$ associated to the minimal region 
$\sigma_1$.  It follows from \eqref{ejregions} that 
\begin{equation} \label{eq:sigma}
\varphi_{\sigma_1}(x) = \sum_{m\in \sigma_1} h(m) \frac{\prod_{\ell_{2r+j}(m)<0}(-\ell_{2r+j}(m)-1)!}{\prod_{\ell_{2r+j}(m)>0}\ell_{2r+j}(m)!} x^m,
\end{equation}
where $h(m)$ is a polynomial.
But, since $f$ is a rational function we deduce that the univariate function
$$ \sum_{k>>0} h(c_1 + k \mu_1) \frac{\prod_{\langle b_{2r+j},\mu_1\rangle <0}(-\langle b_{2r+j}, \mu_1\rangle\,k -\langle b_{2r+j}, c_1\rangle -1)!}{\prod_{{\langle b_{2r+j},\mu_1\rangle >0}}(\langle b_{2r+j}, \mu_1\rangle\,k +\langle b_{2r+j}, c_1\rangle)!} t^k $$
must be a rational function.  But by item ii) in Theorem~\ref{notalgebraic} this is only possible if 
$$\langle b_{2r+j},\mu_1\rangle =0$$
for some $j=1,2,3$. As the ray with direction $\mu_1$ is in the boundary of
the minimal region $\sigma_1$, this implies that $\sigma_1$ cannot be contained
in $\Lambda_{ik}$, where $i,k\not=j$. Consequently,
$b_{2r+j}$ is an inward pointing normal to $\NN(q)$  and our claim is proved.

Given now any rational hyper\-geo\-metric function $F = P/Q$ of degree $\beta$
we may now consider the Laurent expansion of its dehomogenization $f=p/q$ from the vertex of $\NN(q)$ defined by the
edges with inward-pointing normals $b_{2r+1}$ and $b_{2r+2}$.  When that expansion is written as in \eqref{eq:expansion} there must be, by Lemma~\ref{supportcone}, a minimal region $\sigma$ whose recession cone has a boundary
line orthogonal to $b_{2r+1}$ and the corresponding coefficient $a_\sigma$ must be non-zero.
Hence, the map $F \mapsto a_\sigma$ is $1:1$ and the space of rational $A$-hyper\-geo\-metric functions of degree $\beta$ has dimension at most one.  As we have already recalled,  it follows from  \cite[\S 5]{rhf} that
the toric residue $R(c,a)$ is a non zero rational $A$-hyper\-geo\-metric
function of degree $\beta$, which thus spans the vector space of all rational
$A$-hyper\-geo\-metric functions of this degree.
\end{proof}

\begin{example}\label{ex:running3}
We continue with Example~\ref{ex:running}.
Let $F(z)$ be as in \eqref{example:sol} and $f_1, f_2 \in \C[t]$  as in \eqref{example:pols}.  Then we have
$$F(z)\, =\, -R(1) \,=\, -{\rm Res}_{-z_1/z_2} \left(\frac {dt/f_2(t)}{f_1(t)}\right).$$
We showed in Example~\ref{ex:running} that
in inhomogeneous coordinates
$$ z_1 z_5 \,F(z) \,=\, \varphi_{\sigma(I_1)} (x) - \varphi_{\sigma(I_2)} (x)$$
for the minimal regions $\sigma(I_1), \sigma(I_2)$ contained in the first quadrant.  According to Theorem~\ref{th:dim1} neither $\varphi_{\sigma(I_1)} (x)$ nor
$\varphi_{\sigma(I_2)} (x)$ can be rational functions.  Indeed, one can
check by direct computation that, up to sign,  $\varphi_{\sigma(I_1)} (x)$ and 
$\varphi_{\sigma(I_2)} (x)$ agree with the pointwise residues:
$${\rm Res}_{\xi_\pm} \left(\frac {dt/f_1(t)}{f_2(t)}\right),$$
where $\xi_\pm$ are the roots of $f_2(t)$: 
$$ \xi_\pm\,:=\, \frac{-z_5\pm \sqrt{z_5^2-4z_3z_4}}{2z_4}$$
and, in the inhomogeneous coordinates $x_1$, $x_2$, we have:

\begin{eqnarray*}
\varphi_{\sigma(I_1)} (x)  &=&  \sum_{m_1\geq m_2\geq 0 } \frac {(m_1 + m_2)!}{m_1!\, m_2!}\
x_1^{m_1} x_2^{m_2} \\
&=& \frac{1}{2(1-x_1-x_2)} \left(1+\frac{1-2x_2}{\sqrt{1-4x_1x_2}}\right)\,.
\end{eqnarray*}

\end{example}

\section{Classical bivariate rational hyper\-geo\-metric series}\label{sec:rat-bivariate}

In this section we will apply the previous results  to study the rationality of power series
in two variables which generalize the univariate series discussed in
Section~\ref{sec:univariate}, that is, series
 whose coefficients are ratios of products of factorials
 of linear forms  defined over $\Z$.  

Our starting data will be a support cone $\CC$ which will be assumed to
be a two-dimensional rational,  convex polyhedral cone in $\R^2$ and linear functionals
\begin{equation}\label{functionals}
\ell_i(x) \,:=\, \langle b_i,x\rangle + k_i\,,\quad i=1,\dots,n,
\end{equation}
where $b_i \in \Z^2\backslash \{0\}$, $k_i\in \Z$. We will denote by
$\mu_1,\mu_2$ the primitive integral vectors defining the edges of $\CC$ and by $\nu_1,\nu_2 \in \Z^2$ the corresponding primitive inward normals.

\begin{definition} \label{def:Horn}
Given $\CC$ and $\ell_i$, $i=1, \dots,n$ as above, the bivariate series:
\begin{equation}\label{eq:horn}
 \sum_{ m\in\CC\cap\Z^2}
 \frac{\prod_{\ell_i(m) < 0} \ (-1)^{\ell_i(m)}\, (-\ell_i(m)-1)!}{\prod_{\ell_j(m) > 0} \ \ell_j(m)!}
\,
x_1^{m_1} x_2^{m_2}.
\end{equation}
will be called a {\em Horn series}. 
\end{definition}

\begin{remark}\label{rem:Horn}
  Let $\phi(x_1, x_2) = \sum_{ m\in\CC\cap\Z^2} c_m x^m$ be a Horn
  series as in \eqref{eq:horn}.  Then, the coefficients $c_m$ satisfy
  a Horn recurrence; that is, for $j=1,2$, and any $m \in \CC\cap\Z^2$
  such that $m +e_j$ also lies in $\CC$, the ratios:
  \[ R_j(m) \, := \,\frac{c_{m+e_j}}{c_m}\,=\, \frac{ \prod_{b_{ij} <
      0} \prod_{l=0}^{-b_{ij} + 1} \ell_i(m) - l} {\prod_{b_{ij} > 0}
    \prod_{l=1}^{b_{ij}} \ell_i(m) + l}.\] are rational functions of
  $m$ (recall that $e_j$ denote  the standard basis vectors). 
\end{remark}

We are interested in studying when a Horn series defines a rational
function $\phi(x_1, x_2)$.  We will assume that
\begin{equation}\label{eq:sum}
\sum_{i=1}^n b_i \ =\  0 \,,
\end{equation}
and note that \eqref{eq:sum} implies that \eqref{eq:horn} converges
for $|x^{\mu_1}|<\varepsilon$, $|x^{\mu_2}|<\varepsilon$ for any small
$\varepsilon > 0$. 

\begin{remark}\label{horn=hypergeom}
Every Horn series \eqref{def:Horn} is the dehomogenization of an $A$-hyper\-geo\-metric function for some regular configuration $A$.  More precisely,  there exists a codimension-two configuration $A\subset Z^{s-2}$, a vector 
$v\in\Z^s$,  a $\Z$-basis $\nu_1,\nu_2$ of $\ker_\Z(A)$, and an $A$-hyper\-geo\-metric function $F(z)$  of degree $A\cdot v$ such that:
$$ F(z_1,\dots,z_s) \,=\,z^v\ \phi(z^{\nu_1},z^{\nu_2})\,.$$

This may be seen as follows:  the linear forms $\ell_1,\dots,\ell_n$ define an oriented hyperplane arrangement associated with the vector configuration
$$B\, =\, \{b_1,\dots,b_n\}$$
and the vector $(k_1,\dots,k_n)$.  We can enlarge $B$ to a new configuration
$\hat B$ by adding to $B$ pairs of vectors
$\{c,-c\}$ where $c$ ranges over all $b_i\in B$, $\nu_1,\nu_2$, and the
standard basis vectors $e_1$, $e_2$.  $\hat B$ is the Gale dual of a 
configuration $A$ and, for a suitable choice of parameter $\hat v\in \Z^s$,
$s=|\hat B|$, every region in the hyperplane arrangement defined by $(\hat B, \hat v)$ is minimal.  and the series \eqref{eq:horn} is the dehomogenization of an $A$-hyper\-geo\-metric
series of degree $A\cdot \hat v$.
\end{remark}

The following theorem characterizes rational bivariate Horn series:

\begin{theorem}\label{th:conjclassic}
Let $\ell_i(x) = \langle b_i,x\rangle + k_i$, $i=1,\dots,n$, be linear forms
on $\R^2$ defined over $\Z$ and $\CC$ a two-dimensional rational, convex, polyhedral cone
in $\R^2$. Let
\[
\phi(x_1,x_2) = \sum_{ m\in\CC\cap \Z^2}
 \frac{\prod_{\ell_i(m) < 0} \ (-1)^{\ell_i(m)}\, (-\ell_i(m)-1)!}{\prod_{\ell_j(m) > 0} \ \ell_j(m)!}
\,
x_1^{m_1} x_2^{m_2}.
\]
be a Horn series satisfying \eqref{eq:sum}.
Set $B=\{b_1,\dots,b_n\}\subset \Z^2$.
If
$\phi(x_1,x_2)$ is a rational function then either
\begin{itemize}
\item[(i)]  $n = 2r$ is even and, after reordering we may assume:
\begin{equation}\label{eq:case1}
b_1 + b_{r+1} = \cdots = b_{r} + b_{2r } = 0,\ or
\end{equation}
\item[(ii)] $B$ consists of $n = 2r+3$  vectors and,
after reordering, we may assume that $b_1,\dots,b_{2r}$ satisfy \eqref{eq:case1}
and $b_{2r+1} = s_1 \nu_1$, $b_{2r+2} = s_2 \nu_2$, $b_{2r+3} = -b_{2r+1}-b_{2r+2}$, where $\nu_1, \nu_2$ are the primitive, integral, inward-pointing normals of $\CC$ and $s_1,s_2$ are positive integers.
\end{itemize}
\end{theorem}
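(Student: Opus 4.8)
The plan is to reduce Theorem~\ref{th:conjclassic} to Theorem~\ref{th:conj} via the dehomogenization correspondence recorded in Remark~\ref{horn=hypergeom}. Given a rational Horn series $\phi(x_1,x_2)$ with data $(\CC,\{b_i\},\{k_i\})$, I would first invoke that remark to produce a codimension-two configuration $A\subset\Z^{s-2}$, a parameter vector $\hat v\in\Z^s$, and an $A$-hypergeometric function $F(z)=z^{\hat v}\phi(z^{\nu_1},z^{\nu_2})$. Since $\phi$ is rational, $F$ is rational; and because every region of the enlarged hyperplane arrangement $(\hat B,\hat v)$ is minimal, $F$ is supported in a full-dimensional cone, so none of its derivatives vanish — that is, $F$ is \emph{stable}. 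Therefore Theorem~\ref{th:conj} applies and $A$ is affinely equivalent to either a Lawrence configuration or an essential Cayley configuration. The remaining work is to translate each of these two structural conclusions about $A$ (equivalently, about its Gale dual $\hat B$) back into the stated conditions (i) and (ii) on the original configuration $B$.

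The key translation step goes as follows. By Remark~\ref{Gale-remark}, the Gale dual $\hat B$ of $A$ is, up to the $\GL(2,\Z)$-ambiguity, exactly the configuration obtained by padding $B$ with the extra $\{c,-c\}$ pairs (for $c$ ranging over the $b_i$, $\nu_1$, $\nu_2$, $e_1$, $e_2$). A Lawrence configuration has a Gale dual consisting entirely of pairs $\{b,-b\}$; an essential Cayley configuration of codimension two has Gale dual $\{b_1,-b_1,\dots,b_r,-b_r,w_1,w_2,w_3\}$ with $w_1+w_2+w_3=0$ and $w_1,w_2,w_3$ not collinear (this is the normal form used at the start of the proof of Theorem~\ref{th:dim1}). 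So the question becomes: which vectors of the original $B$ survive into the \emph{reduced configuration} $\tilde{\hat B}$ (in the sense introduced in the proof of Theorem~\ref{th:conj}: delete zero vectors and all mutually-opposite pairs). Crucially, the vectors added to form $\hat B$ are precisely those that become redundant (each added $c$ is cancelled by its partner $-c$), except when some $b_i$ already equals $\pm\nu_1,\pm\nu_2,\pm e_1,\pm e_2$; I would handle this by noting the reduced configuration is intrinsic and that padding by opposite pairs does not change it. Hence $\tilde{\hat B}$ coincides with the reduced configuration $\tilde B$ of the original $B$ together with, at most, the ``leftover'' normal directions forced to appear because of the convergence/support geometry of $\CC$.

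Case analysis: if $\tilde B=\emptyset$, then every $b_i$ in $B$ cancels in an opposite pair, which is exactly condition (i), with $n=2r$ even and the reordering $b_1+b_{r+1}=\cdots=b_r+b_{2r}=0$. If $\tilde B\neq\emptyset$, then since $A$ must be essential Cayley (it cannot be Lawrence, whose reduced Gale dual is empty), $\tilde{\hat B}$ must be of the form $\{w_1,w_2,w_3\}$ with $w_1+w_2+w_3=0$ and the $w_j$ non-collinear. I would argue that these three surviving vectors must be $s_1\nu_1$, $s_2\nu_2$, and $-s_1\nu_1-s_2\nu_2$: the point is that the series \eqref{eq:horn} is supported in the cone $\CC$, whose edges are $\mu_1,\mu_2$ with inward normals $\nu_1,\nu_2$; restricting $\phi$ along a ray in direction $\mu_j$ (using Lemma~\ref{supportcone} to guarantee infinitely many nonzero terms there) gives a univariate series whose exponents $\langle b_i,\mu_j\rangle$ must cancel in pairs by Theorem~\ref{notalgebraic}(ii), and the only uncancelled directions compatible with this are those orthogonal to $\mu_1$ or $\mu_2$, i.e. multiples of $\nu_1$, $\nu_2$. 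Writing $b_{2r+1}=s_1\nu_1$, $b_{2r+2}=s_2\nu_2$ with $s_1,s_2$ positive (the sign forced by convergence in the region $|x^{\mu_1}|,|x^{\mu_2}|<\varepsilon$, i.e. by which side of each edge the support lies) and $b_{2r+3}=-b_{2r+1}-b_{2r+2}$ by \eqref{eq:sum} applied to $\tilde B$, gives precisely (ii). All $b_i$ not among these three must then cancel in opposite pairs, yielding the reordering \eqref{eq:case1} for $b_1,\dots,b_{2r}$ and $n=2r+3$.

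I expect the main obstacle to be the bookkeeping in the second paragraph: making precise that the reduced configuration $\tilde{\hat B}$ of the padded Gale dual equals (the reduction of) the original $B$ enriched by exactly the normal directions of $\CC$, and not some other collinear variant. One must be careful that the $\GL(2,\Z)$-action relating $\hat B$ to an abstract Gale dual does not destroy the identification of $\nu_1,\nu_2$ with the edge normals of $\CC$ — but since the dehomogenization $x_j=z^{\nu_j}$ fixes the coordinate system, this is consistent. A secondary subtlety is pinning down the signs $s_1,s_2>0$ rather than merely $s_1,s_2\neq 0$; this follows from the stated convergence of \eqref{eq:horn} on $|x^{\mu_i}|<\varepsilon$ together with the orientation conventions, but it deserves an explicit sentence. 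Everything else is a direct appeal to Theorem~\ref{th:conj}, Lemma~\ref{supportcone}, and Theorem~\ref{notalgebraic}.
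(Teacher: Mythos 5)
Your overall architecture is the same as the paper's: invoke Remark~\ref{horn=hypergeom} to realize $\phi$ as the dehomogenization of a rational (hence stable) $A$-hyper\-geo\-metric function, apply Theorem~\ref{th:conj} to conclude that $A$ is Lawrence or Cayley essential, and translate back to $B$; in the Cayley essential case your edge-restriction argument (Lemma~\ref{supportcone} plus Theorem~\ref{notalgebraic}(ii) along rays in the directions $\mu_1,\mu_2$) is essentially the argument the paper uses inside the proof of Theorem~\ref{th:dim1}, and it does correctly force the three unpaired vectors to be of the form $s_1\nu_1$, $s_2\nu_2$, $-s_1\nu_1-s_2\nu_2$ with $s_1,s_2\neq 0$.

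The genuine gap is the positivity of $s_1,s_2$. Your proposed justification --- that the sign is ``forced by convergence in the region $|x^{\mu_1}|,|x^{\mu_2}|<\varepsilon$'' --- cannot work: as noted right after \eqref{eq:sum}, that convergence holds for \emph{every} Horn series satisfying \eqref{eq:sum}, irrespective of the signs of the $s_i$. Nor does the edge restriction see the sign: on the edge ray in direction $\mu_1$ the form $\ell_{2r+1}$ is constant and the slopes of $\ell_{2r+2}$ and $\ell_{2r+3}$ are $\pm s_2\langle\nu_2,\mu_1\rangle$, which cancel in Theorem~\ref{notalgebraic}(ii) whether $s_2$ is positive or negative. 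The paper closes this by a support argument: by the proof of Theorem~\ref{th:dim1}, $\phi=\sum_\sigma a_\sigma\varphi_\sigma$ where every contributing minimal region $\sigma$ lies in one of the sectors $\Lambda_{ij}=\{\ell_{2r+i}\geq 0,\ \ell_{2r+j}\geq 0\}$ (because a $\delta$-diagonal taken in a direction where only one of the three forms is positive yields a factorial ratio of height $-1$, which is not algebraic); since the Horn series has nonzero coefficients on all of $\CC\cap\Z^2$, the cone $\CC$ must coincide with one such sector, so $b_{2r+i}$ and $b_{2r+j}$ are \emph{inward-pointing} normals of $\CC$, i.e.\ positive multiples of $\nu_1,\nu_2$. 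You could equally well repair your argument by taking diagonals in directions $\delta$ interior to $\CC$ but close to an edge and observing that a negative $s_i$ produces exactly such a height $-1$ ratio, contradicting Proposition~\ref{diag}; but some argument of this kind, beyond convergence, is indispensable.
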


\begin{proof} As noted in Remark~\ref{horn=hypergeom}, the series 
$\phi(x_1,x_2)$
may be viewed as the dehomogenization of an $A$-hyper\-geo\-metric function, for a suitable regular configuration $A$ whose Gale dual $\hat B$ is obtained
from $B$ by adding pairs of vectors $\{c,-c\}$, $c\in \Z^2$. 
Since $A$ admits a stable rational hyper\-geo\-metric function, it follows from
Theorem~\ref{th:conj} that $A$ is either a Lawrence configuration or a Cayley essential configuration.  It is now clear that in the first case, $B$ satisfies \eqref{eq:case1}, while in the second, $n=2r+3$ and we may assume that
$b_1,\dots,b_{2r}$ also satisfy \eqref{eq:case1}, while
$$b_{2r+1} +b_{2r+2} +b_{2r+3} =0$$

Moreover, if $A$ is a Cayley essential configuration then it is shown in the proof of Theorem~\ref{th:dim1} that
$$\phi(x) \,=\, \sum a_\sigma \varphi_\sigma(x),$$
where $\varphi_\sigma(x)$ are  canonical series, as in \eqref{eq:1.2}, associated with the minimal regions of the hyperplane arrangement of $\hat B$,
and the sum runs over all minimal regions contained in one of the sectors defined
by  the half-spaces $\ell_{2r+1}\geq 0$, $\ell_{2r+2}\geq 0$, $\ell_{2r+3}\geq 0$.  
But then, since the expansion \eqref{eq:horn} is not supported in any proper subcone of $\CC$, it follows that $\CC$ must agree with one of those sectors. Hence, after reordering if necessary, we have that
$$b_{2r+1} = s_1 \nu_1,\quad b_{2r+2} = s_2 \nu_2.$$
\end{proof}

\begin{example}\label{ex:gessell}
As an illustration of the type of series  Theorem~\ref{th:conjclassic} refers to
consider the following expansion  from \cite{gessellx}[Example 9.2]
$$
\varphi(x) = \frac{1-x_1x_2}{1-x_1x_2^2-3x_1x_2-x_1^2x_2}=
\sum_{m\in \CC\cap \Z^2} \binom{m_1+m_2}{2m_1-m_2}\,x_1^{m_1}x_2^{m_2},
$$
where $\CC:=\{2m_1-m_2\geq 0, 2m_2-m_1\geq 0\}$.

The series
\begin{equation} \label{eq:varphi}
\phi(x) = \varphi( - x) = \sum_{m\in \CC\cap \Z^2} (-1)^{m_1 + m_2} \binom{m_1+m_2}{2m_1-m_2}\,x_1^{m_1}x_2^{m_2}
\end{equation}
is a Horn series. It follows from Theorem~\ref{th:dim1} that $\phi(x)$  may be represented
as a residue.  Indeed, following the notation of Theorem~\ref{th:conjclassic},
the configuration $B$ is defined by the vectors $b_1=(-1,-1)$, $b_2=(-1,2)$, $b_3=(2,-1)$.  We enlarge it to a configuration $\hat B$ by adding the vectors $b_4=(1,0)$, and 
$b_5 = (-1,0)$.  Now, $\hat B$ is the Gale dual of the Cayley essential 
configuration
$$A \ =\ \left(
\begin{array}{rrrrrrr}
1& 1 &1 &0&0\\
0&0&0&1&1\\
0&1&2&0&3\\
\end{array}
\right) 
$$
and $\phi(x)$ is the dehomogenization of an $A$-hyper\-geo\-metric toric residue associated to
$f_1=z_1 + z_2 t + z_3 t^2, f_2 = z_4+z_5 t^3$.
Explicitely, in inhomogeneous coordinates we have:
\[\phi(x) \, = \, \sum_\eta {\rm Res}_\eta \left( \frac {x_2 t/(x_2+x_2 t- t^2)}{x_2+x_1t^3} \,dt \right),\]
where $\eta$ runs over the three cubic roots of $-x_2/x_1$; that is,
$\phi$ is the global residue with respect to the family of polynomials
$x_2 + x_1 t^3$ of the rational function of $t$ (depending
parametrically on $x$) defined by $t/(1+ t- x_2^{-1} t^2)$.
\end{example}

In the remainder of this section we consider the special case where
$\CC$ is the first quadrant.
The following  series will play a central role in our discussion.

\begin{proposition}\label{prop:biv-rational}
The series
\begin{equation}\label{eq:biv-cayley}
f_{(s_1,s_2)}(x)\,  := \, \sum_{m\in\N^2} \frac{(s_1 m_1 + s_2 m_2)!}{(s_1
 m_1)!(s_2 m_2)!}\  x_1^{m_1} x_2^{m_2}\,.
 \end{equation}
defines a rational function for all $(s_1,s_2)\in\N^2$.
\end{proposition}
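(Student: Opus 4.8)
The plan is to realize $f_{(s_1,s_2)}$ as a \emph{multisection} (root-of-unity average) of the manifestly rational function $(1-u-v)^{-1}$, in the spirit of Proposition~\ref{diag}.

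First I would use the expansion $(1-u-v)^{-1}=\sum_{a,b\ge 0}\binom{a+b}{a}u^av^b$, so that the coefficient $(s_1m_1+s_2m_2)!/((s_1m_1)!(s_2m_2)!)$ of $f_{(s_1,s_2)}$ equals $\binom{a+b}{a}$ with $a=s_1m_1$, $b=s_2m_2$. Writing $\mu_s$ for the group of $s$-th roots of unity and recalling that $\frac1s\sum_{\zeta\in\mu_s}\zeta^a$ is $1$ when $s\mid a$ and $0$ otherwise, I would set
\[
g(u,v)\ :=\ \frac{1}{s_1s_2}\sum_{\zeta_1\in\mu_{s_1}}\ \sum_{\zeta_2\in\mu_{s_2}}\ \frac{1}{1-\zeta_1u-\zeta_2v}.
\]
Being a finite sum of rational functions, $g$ is rational; and term-by-term expansion at the origin gives
\[
g(u,v)\ =\ \sum_{s_1\mid a,\ s_2\mid b}\binom{a+b}{a}u^av^b\ =\ \sum_{m_1,m_2\ge0}\frac{(s_1m_1+s_2m_2)!}{(s_1m_1)!\,(s_2m_2)!}\,u^{s_1m_1}v^{s_2m_2}\ =\ f_{(s_1,s_2)}\bigl(u^{s_1},v^{s_2}\bigr).
\]

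The remaining point is that a rational function whose Taylor series at the origin involves only monomials $u^av^b$ with $s_1\mid a$ and $s_2\mid b$ is itself a rational function of $u^{s_1}$ and $v^{s_2}$; granting this, the displayed identity forces $f_{(s_1,s_2)}(x_1,x_2)=h(x_1,x_2)$, where $h$ is the rational function with $g(u,v)=h(u^{s_1},v^{s_2})$, which finishes the proof. To prove the point, observe that such a $g$ satisfies $g(\zeta_1u,\zeta_2v)=g(u,v)$ for all $\zeta_1\in\mu_{s_1}$, $\zeta_2\in\mu_{s_2}$, first as power series and hence as rational functions; writing $g=P/Q$ in lowest terms, both the polynomial $Q_0:=\prod_{\zeta_1,\zeta_2}Q(\zeta_1u,\zeta_2v)$ and $gQ_0$ (a polynomial, since $Q$ divides $Q_0$) are invariant under $(u,v)\mapsto(\zeta_1u,\zeta_2v)$, and an invariant polynomial is a polynomial in $u^{s_1},v^{s_2}$, so $g=(gQ_0)/Q_0\in\C(u^{s_1},v^{s_2})$.

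I expect this last invariance step to be the only non-routine ingredient; the rest is bookkeeping with geometric series. As a cross-check — and in harmony with the residue description of stable rational $A$-hyper\-geo\-metric functions in \S\ref{sec:residues} — one may instead insert $\binom{a+b}{a}=\frac{1}{2\pi i}\oint(1+t)^{a+b}t^{-a-1}\,dt$ over a small circle and sum the resulting geometric series in $m_1$ and $m_2$, which exhibits $f_{(s_1,s_2)}(x_1,x_2)$ as the sum of the residues inside that circle of the rational $1$-form
\[
\frac{t^{s_1-1}\,dt}{\bigl(t^{s_1}-(1+t)^{s_1}x_1\bigr)\bigl(1-(1+t)^{s_2}x_2\bigr)},
\]
and the residue of a rational $1$-form is again rational in the parameters $x_1,x_2$.
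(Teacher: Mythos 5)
Your argument is correct, but it is not the route the paper's displayed proof takes. The paper's proof of Proposition~\ref{prop:biv-rational} realizes $f_{(s_1,s_2)}$ as the dehomogenization of a Laurent $A$-hyper\-geo\-metric series for an explicit codimension-two Cayley essential configuration, invokes the uniqueness statement to identify it with a toric residue, and thereby obtains rationality together with the explicit residue formula \eqref{sres}. What you propose instead is an elementary multisection argument: average $(1-u-v)^{-1}$ over $\mu_{s_1}\times\mu_{s_2}$ to extract the coefficients with $s_1\mid a$, $s_2\mid b$, and then descend from a rational function of $(u,v)$ invariant under that group to a rational function of $(u^{s_1},v^{s_2})$. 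This is precisely the ``alternative proof'' the paper itself records immediately after the proposition: your averaging identity is \eqref{eq:r1r2bis} applied to $f_{(1,1)}=1/(1-x_1-x_2)$ (Lemma~\ref{lemma:r1r2}), and your invariance/descent step is Lemma~\ref{lemma1} (the paper proves it by showing numerator and denominator are separately invariant in lowest terms; your $Q_0=\prod Q(\zeta_1u,\zeta_2v)$ device is an equally valid variant). The trade-off is clear: your proof is shorter and self-contained, while the paper's primary proof buys the identification of $f_{(s_1,s_2)}$ with a toric residue, which is what feeds into Theorem~\ref{th:conjclassic2}. Two small points: your argument as written assumes $s_1,s_2\geq 1$ (for $s_i=0$ the group $\mu_{s_i}$ makes no sense, but then $f_{(s_1,s_2)}=1/((1-x_1)(1-x_2))$ trivially, as the paper notes); and your closing contour-integral ``cross-check'' would need an extra word on why the sum of residues over only the poles \emph{inside} the circle is rational in $(x_1,x_2)$ --- it is, by symmetry of the relevant roots, but that is exactly the kind of bookkeeping the multisection argument lets you avoid, so it is best left as the aside you intend it to be.
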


\begin{proof}
The assertion is evident if either $s_1=0$ or $s_2=0$ since in this case
\eqref{eq:biv-cayley} becomes:
\begin{equation}\label{eq:biv-lawrence}
f_0(x_1,x_2)\  =\  \sum_{m\in\N^2} x_1^{m_1} x_2^{m_2}\ =\ \frac{1}{(1-x_1)(1-x_2)}\,,
\end{equation}
as well as in the case when $s_1=s_2=1$ since
\[f_{(1,1)}(x) \, = \, \, \sum_{m\in\N^2} \frac{(m_1 + m_2)!}{
 m_1!\, m_2!}\  x_1^{m_1} x_2^{m_2}\,=\,\frac{1}{1-x_1-x_2}.
 \]
More in general, given any $s_1, s_2 >0$, 
consider the Cayley essential configuration:
$$A \ =\ \left(
\begin{array}{rrrrrrr}
1& 1 &1 &0&0&0&0\\
0&0&0&1&1&0&0\\
0&0&0&0&0&1&1\\
1&0&0&0&s_1&0&0\\
0&1&0&0&0&0&s_2\\
\end{array}
\right) 
$$
and $\beta=(-1,-1,-1,-s_1,-s_2)^t = A\cdot (0,0,-1,0,-1,0,-1)^t$. 
Consider the hyperplane arrangement associated with the vector $(0,0, -1, 0, -1, 0, -1)^t$ and the Gale dual
$B$ of $A$ with rows $b_1 =(s_1,0), b_2 = (0, s_2), b_3= (-s_1, -s_2)$, $b_4 = (1,0) = - b_5$,
$b_6= (0,1) = - b_7$. The first quadrant is a minimal region and the 
corresponding Laurent $A$-hypergeometric series is:
$$F(z) \ =\ \frac {1}{z_3z_5z_7}\sum_{m\in \N^2} \frac{(s_1m_1+s_2m_2)!}{(s_1m_1)!(s_2m_2)!} \left(\frac{(-z_1)^{s_1}z_4}{z_3^{s_1}z_5}  \right)^{m_1}\left(\frac{(-z_2)^{s_2}z_6}{z_3^{s_2}z_7}  \right)^{m_2}.$$
Thus, the rationality $F$ implies that of $f_{(s_1,s_2)}$.  But,
since the first quadrant is the only minimal
region contained
in the open half space $\{s_1 m_1 + s_2 m_2 \ge 0\}$ and $A$ is a Cayley essential configuration, the series $F(z)$ must
agree with a Laurent expansion of the toric residue
\[   {\rm Res}\left(
\frac{t_1^{s_1}t_2^{s_2}/(z_1t_1+z_2t_2+z_3)}{(z_4+z_5t_1^{s_1})(z_6+z_7t_2^{s_2})} \, \frac{dt_1}{t_1}\wedge\frac{dt_2}{t_2}\right),\]
which is a rational function.  In fact, we can write explicitly:
\begin{equation}\label{sres}
f_{(s_1,s_2)}(x)\,=\,\sum_{\xi_1^{s_1} = - x_1, \xi_2^{s_2} = - x_2} {\rm Res}_\xi \left(
\frac{t_1^{s_1}t_2^{s_2}/(t_1+t_2+1)}{(x_1+t_1^{s_1})(x_2+t_2^{s_2})} \, \frac{dt_1}{t_1}\wedge\frac{dt_2}{t_2}\right).
\end{equation}
\end{proof}

An alternative proof of Proposition~\ref{prop:biv-rational} follows from the the fact that $f_{(1,1)}$ is rational together with the following two lemmas, which are of independent interest. 

\begin{lemma}\label{lemma1}
Suppose
$$
\sum_{m\in\N^2}a(m_1,m_2)\, x_1^{r_1m_1}x_2^{r_2m_2},
$$
for some  fixed positive integers $r_1,r_2$,
is the Taylor expansion of a rational function $f(x_1,x_2)$. Then the same is true of
$$
\sum_{m\in\N^2}a(m_1,m_2)\, x_1^{m_1}x_2^{m_2}.
$$
\end{lemma}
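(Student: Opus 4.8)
The statement is a purely formal fact about rational power series, so I would approach it by a standard "section of a rational function along a sublattice" argument using roots of unity. First I would reduce to the one-variable case applied twice: it suffices to prove that if $\sum_{n\geq 0} c_n(x_2)\, x_1^{r n}$ is the expansion of a rational function $g(x_1,x_2)\in\C(x_1,x_2)$ (with coefficients $c_n$ lying in $\C(x_2)$, obtained by grouping the terms of $f$ with $x_1$-exponent divisible by $r_1$), then $\sum_{n\geq 0} c_n(x_2)\, x_1^{n}$ is again rational; and then apply the same statement in the $x_2$-variable. Actually it is cleanest to prove directly the one-variable statement: if $h(x)=\sum_{n\geq 0} a_{rn} x^{rn}$ is the Taylor expansion of the rational function $f(x)$ obtained by extracting the terms whose exponent is divisible by $r$, i.e. $h(x)=\frac1r\sum_{\zeta^r=1} f(\zeta x)$, then the "de-dilated" series $\sum_{n\geq 0} a_{rn} x^n$ is rational — and all of this works verbatim with coefficients in any field, in particular over $\C(x_2)$.

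The key step is the explicit formula. Write $f(x)=\sum_{n\geq 0} a_n x^n$ rational, so $a_n$ satisfies a linear recurrence with constant coefficients for $n$ large, equivalently $a_n = \sum_j P_j(n)\lambda_j^n$ (plus finitely many exceptional low-order terms) for polynomials $P_j$ and scalars $\lambda_j\neq 0$. Then $a_{rn} = \sum_j P_j(rn)\mu_j^n$ with $\mu_j=\lambda_j^r$, and $P_j(rn)$ is again a polynomial in $n$; hence $n\mapsto a_{rn}$ satisfies a linear recurrence with constant coefficients, so $\sum_n a_{rn} x^n$ is rational. To keep this self-contained and avoid invoking the exponential-polynomial form, I would instead argue with generating functions: from $f(x)=P(x)/Q(x)$ in lowest terms, the sequence $(a_n)$ satisfies $\sum_{i} q_i a_{n-i}=0$ for $n>\deg P$, where $Q(x)=\sum q_i x^i$; a standard argument shows $(a_{rn})_n$ then satisfies the recurrence whose characteristic polynomial is $\prod_{\zeta^r=1}Q(\zeta^{?}\,)$ suitably interpreted — concretely, $(a_{rn})$ is a $\C$-linear (here $\C(x_2)$-linear) combination of the sequences $n\mapsto \text{(terms of }f\text{ at a fixed residue)}$, each of which is eventually $C$-recursive, and the subsequence of a $C$-recursive sequence along an arithmetic progression is $C$-recursive. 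Either way, the map "extract the arithmetic progression $rn$ and relabel $n$" sends $C$-finite sequences to $C$-finite sequences, which is exactly the assertion that the relabeled generating function is rational.

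Applying this first in the variable $x_1$ with $r=r_1$ (treating the $x_1$-coefficients as elements of the field $\C(x_2)$) produces a rational function $f_1(x_1,x_2)$ with Taylor expansion $\sum a(m_1,m_2)\, x_1^{m_1} x_2^{r_2 m_2}$; then applying it in the variable $x_2$ with $r=r_2$ (coefficients now in $\C(x_1)$) yields $\sum a(m_1,m_2)\, x_1^{m_1}x_2^{m_2}$ rational, as desired. The main obstacle — and the only place requiring care — is the bookkeeping that guarantees the coefficient extraction is legitimate over the field $\C(x_2)$ rather than $\C$: one must check that grouping the Taylor monomials of $f$ by the residue of the $x_1$-exponent mod $r_1$ is compatible with the partial-fraction / recurrence structure, which it is because a bivariate rational function, expanded as a Laurent-free power series in $x_1$ around the origin, has coefficients in $\C(x_2)$ that satisfy a single $\C(x_2)$-linear recurrence coming from its (irreducible, hence nonzero at $x_1=0$) denominator. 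Once that is in place the argument is routine.
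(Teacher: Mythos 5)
Your strategy is genuinely different from the paper's. The paper's proof is a short symmetry argument: writing $f=A/B$ in lowest terms, the support hypothesis forces $f(\zeta_1x_1,\zeta_2x_2)=f(x_1,x_2)$ for all $\zeta_1^{r_1}=\zeta_2^{r_2}=1$; coprimality then gives $A(\zeta_1x_1,\zeta_2x_2)=c\,A(x_1,x_2)$ and $B(\zeta_1x_1,\zeta_2x_2)=c\,B(x_1,x_2)$, evaluating $B$ at the origin (where it is nonzero) gives $c=1$, so $A$ and $B$ are polynomials in $x_1^{r_1},x_2^{r_2}$ and one simply substitutes $x_i^{r_i}\mapsto x_i$. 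Your route through $C$-finite sequences over the field $\C(x_2)$ proves a stronger one-variable fact (arbitrary arithmetic-progression sections preserve rationality), but it pays for this with an iteration whose hinge you do not actually secure.

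The gap is in chaining the two one-variable steps. After the first step you obtain $f_1\in\C(x_2)(x_1)=\C(x_1,x_2)$ whose expansion \emph{as a power series in $x_1$ with coefficients in $\C(x_2)$} is the expected one; but to apply the argument again in $x_2$, and to conclude at the end that the final function has $\sum a(m_1,m_2)x_1^{m_1}x_2^{m_2}$ as its Taylor expansion at the origin of $\C^2$, you need $f_1$ to be regular at $(0,0)$ and its joint Taylor expansion there to agree with the iterated expansion. The recurrence argument only guarantees that the denominator of $f_1$ has nonzero constant term as an element of $\C(x_2)$; after clearing denominators that constant term is a polynomial in $x_2$ which could a priori vanish at $x_2=0$. (Your parenthetical ``irreducible, hence nonzero at $x_1=0$'' is also off: irreducibility of the denominator of $f$ does not prevent it from vanishing at the origin --- what you are actually using is the hypothesis that $f$ admits a Taylor expansion at $(0,0)$.) The point is fixable --- for instance, the reciprocal roots $\lambda_j(x_2)$ of $Q(\cdot,x_2)$ stay bounded as $x_2\to 0$ because $Q(0,0)\neq 0$, so the denominator $\prod_j\bigl(1-\lambda_j(x_2)^{r_1}x_1\bigr)$ of $f_1$ does not vanish at the origin; alternatively the $a(m_1,m_2)$ grow at most geometrically, so the target double series converges to a holomorphic function that must coincide with $f_1$ --- but as written this step is asserted rather than proved, and it is exactly where the content of the two-variable statement lives. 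The paper's argument avoids the issue entirely by never leaving the reduced fraction $A/B$.
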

\begin{proof}
Write $f=A/B$, where  $A,B$ are relatively prime polynomials in
$\C[x_1,x_2]$. For any $\zeta_1, \zeta_2\in \C^2$ such that
$\zeta_1^{r_1}=\zeta_2^{r_2}=1$ we have
$$
A(\zeta_1x_1,\zeta_2x_2)\,B(x_1,x_2)=
A(x_1,x_2)\,B(\zeta_1x_1,\zeta_2x_2).
$$
Hence
$$
A(\zeta_1x_1,\zeta_2x_2)= c\,A(x_1,x_2), \qquad
B(\zeta_1x_1,\zeta_2x_2)= c\,B(x_1,x_2)
$$
for some non-zero constant $c\in \C$. Since $B(0,0)$ is nonzero (as we
assume $f$ is holomorphic at the origin) evaluating at $(0,0)$ shows
$c=1$ and the result follows.
\end{proof}

\begin{lemma} \label{lemma:r1r2}
Suppose
$$
\sum_{m\in\N^2}a(m_1,m_2)\, x_1^{m_1}x_2^{m_2}
$$
is the Taylor expansion of a rational function $f(x_1,x_2)$. Then
the same is true of
$$
\sum_{m\in\N^2}a(r_1m_1,r_2m_2)\, x_1^{m_1}x_2^{m_2},
$$
for any fixed $(r_1,r_2)\in \Z_{>0}^2$.
\end{lemma}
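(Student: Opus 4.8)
The plan is to extract the sub-series indexed by $(r_1 m_1, r_2 m_2)$ via a finite averaging over roots of unity, exactly as one extracts a single residue class of coefficients from a one-variable power series, and then argue that this operation preserves rationality. First I would set $f(x_1,x_2) = \sum_{m\in\N^2} a(m_1,m_2)\,x_1^{m_1}x_2^{m_2}$ and recall the classical identity: for a fixed positive integer $r$ and a variable $y$,
\[
\frac{1}{r}\sum_{\zeta^r=1}\zeta^{-j} y^{j} =
\begin{cases} y^{j} & \text{if } r \mid j,\\ 0 & \text{otherwise,}\end{cases}
\]
where the sum is over the $r$-th roots of unity. Applying this in each variable separately, one obtains the multisection formula
\[
\sum_{m\in\N^2} a(r_1 m_1, r_2 m_2)\,x_1^{r_1 m_1} x_2^{r_2 m_2}
= \frac{1}{r_1 r_2}\sum_{\zeta_1^{r_1}=1}\sum_{\zeta_2^{r_2}=1} f(\zeta_1 x_1,\zeta_2 x_2),
\]
valid as an identity of formal power series (and of holomorphic functions near the origin, since $f$ is holomorphic there). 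The right-hand side is a finite $\C$-linear combination of the rational functions $f(\zeta_1 x_1,\zeta_2 x_2)$, hence is itself a rational function of $(x_1,x_2)$; denote it $g(x_1,x_2)$.

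Next I would pass from $g$ to the desired series by substituting $x_i^{r_i}\mapsto x_i$. Concretely, $g$ is a rational function whose Taylor expansion at the origin only involves monomials $x_1^{r_1 m_1} x_2^{r_2 m_2}$; write $g = A/B$ with $A,B$ relatively prime polynomials and $B(0,0)\neq 0$. The root-of-unity invariance built into the definition of $g$ gives $g(\omega_1 x_1,\omega_2 x_2) = g(x_1,x_2)$ for all $\omega_i$ with $\omega_i^{r_i}=1$, and by the same argument as in the proof of Lemma~\ref{lemma1} this forces $A(\omega_1 x_1,\omega_2 x_2)=A(x_1,x_2)$ and $B(\omega_1 x_1,\omega_2 x_2)=B(x_1,x_2)$. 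Therefore both $A$ and $B$ are polynomials in $x_1^{r_1}$ and $x_2^{r_2}$, say $A(x_1,x_2)=\tilde A(x_1^{r_1},x_2^{r_2})$ and $B(x_1,x_2)=\tilde B(x_1^{r_1},x_2^{r_2})$ for polynomials $\tilde A,\tilde B\in\C[x_1,x_2]$ with $\tilde B(0,0)\neq 0$. Then $h(x_1,x_2) := \tilde A(x_1,x_2)/\tilde B(x_1,x_2)$ is a rational function holomorphic at the origin, and substituting $x_i\mapsto x_i^{r_i}$ in the Taylor expansion of $h$ recovers $g$; comparing coefficients shows that the Taylor expansion of $h$ is precisely $\sum_{m\in\N^2} a(r_1 m_1, r_2 m_2)\,x_1^{m_1} x_2^{m_2}$, which is what we wanted.

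I do not expect a serious obstacle here: the only points requiring a little care are (a) justifying the multisection identity at the level of convergent Taylor series rather than merely formal ones — this is immediate since a finite sum of functions holomorphic near $0$ is holomorphic near $0$, and the coefficient extraction is legitimate termwise — and (b) the descent $B(\omega_i x_i, \cdot)=B$ $\Rightarrow$ $B\in\C[x_1^{r_1},x_2^{r_2}]$, which is the elementary fact that a polynomial invariant under the finite abelian group generated by $x_i\mapsto \omega_i x_i$ lies in the subring generated by the invariant monomials $x_i^{r_i}$. Everything else is formal bookkeeping, so this is the natural and essentially forced line of argument.
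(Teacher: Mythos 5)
Your proof is correct and follows essentially the same route as the paper: the paper's proof consists precisely of the bivariate multisection identity over roots of unity (its equation for $\sum a(r_1m_1,r_2m_2)x_1^{r_1m_1}x_2^{r_2m_2}$ as a finite average of $f(\zeta_1x_1,\zeta_2x_2)$), followed by an appeal to Lemma~\ref{lemma1} to descend from $x_i^{r_i}$ to $x_i$. Your write-up merely makes explicit the root-of-unity invariance of numerator and denominator that underlies Lemma~\ref{lemma1}, which the paper leaves implicit.
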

\begin{proof}
Note that
\begin{equation} \label{eq:r1r2bis}
\sum_{m\in\N^2}a(r_1m_1,r_2m_2)\, x_1^{r_1m_1}x_2^{r_2m_2}=
\frac1{r_1r_2}\sum_{\zeta_1^{r_1}=\zeta_2^{r_2}=1}f(\zeta_1 x_1,\zeta_2 x_2),
\end{equation}
for sufficiently small $|x_1|$ and $|x_2|$. The right hand side is clearly
a rational function. Hence our claim follows from Lemma~\ref{lemma1}.
\end{proof}

Note that the local sum of residues ~\eqref{sres}
has the same form as the sum in \eqref{eq:r1r2bis} in
the proof of Lemma~\ref{lemma:r1r2}.

\begin{example}
Consider the case $s=(2,2)$. By definition
$$
f_{(2,2)}(x_1,x_2)=\sum_{m\in\N^2}
\frac{(2m_1+2m_2)!}{(2m_1)!(2m_2)!}\,x_1^{m_1}x_2^{m_2}.
$$
Equality \eqref{eq:r1r2bis} reads
$$
f_{(2,2)}(x_1^2,x_2^2)=\tfrac 14
(f(x_1,x_2)+f(-x_1,x_2)+f(x_1,-x_2)+f(-x_1,-x_2)),
$$
where
$
f(x):=f_{(1,1)}(x)$.
Then,
$$
f_{(2,2)}(x_1^2, x_2^2) \, = \, \frac{1-x_1^2-x_2^2}{1-2x_1^2-2x_2^2-2x_1^2x_2^2+x_1^4+x_2^4},
$$
and hence
$$
f_{(2,2)}(x_1,x_2)=\frac {1-x_1-x_2}{1-2x_1-2x_2-2x_1x_2+x_1^2+x_2^2}.
$$
\end{example}

Our last result shows that, up to the action of differential operators of the form
\ref{eq:P} below, all rational Horn series with support on all the integer points of the
first quadrant are given by the functions $f_{(s_1, s_2)}$.

\begin{theorem}\label{th:conjclassic2}
Let $\ell_i(x) = \langle b_i,x\rangle + k_i$, $i=1,\dots,n$, be linear forms
on $\R^2$ defined over $\Z$ and suppose that the Horn series
\[
\phi(x_1,x_2) = \sum_{ m\in\N^2}
 \frac{\prod_{\ell_i(m) < 0} \ (-1)^{\ell_i(m)}\, (-\ell_i(m)-1)!}{\prod_{\ell_j(m) > 0} \ \ell_j(m)!}
\,
x_1^{m_1} x_2^{m_2}.
\]
 satisfies \eqref{eq:sum} and defines 
 a rational function.

Then,  there exist  differential operators $P_1(\theta), P_2(\theta)$ of the form
\begin{equation}
\label{eq:P}
 \prod_j \langle b_j,\theta\rangle + c_j,  \quad \theta = (\theta_1, \theta_2), \,
 \theta_i = x_i\, \partial/\partial x_i\,; \  c_j\in \Z,
\end{equation}
such that $$P_1(\theta) \cdot \phi(x)\, =\, P_2(\theta)\cdot f_{(s_1, s_2)}(\pm x_1, {\pm} x_2),$$
 where $s_1=s_2=0$ in case $B=\{b_1,\dots,b_n\}$ is a Lawrence configuration  and $s_1, s_2 > 0$ if $B$ is Cayley essential.
\end{theorem}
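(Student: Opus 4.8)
The plan is to combine Theorem~\ref{th:conjclassic} with the dictionary relating differentiation of $A$-hypergeometric functions to operators of the form~\eqref{eq:P}, and then to carry out a direct comparison of Taylor coefficients. By Remark~\ref{horn=hypergeom}, $\phi$ is the dehomogenization of an $A$-hypergeometric function $F_\phi$ for a regular configuration $A$ whose Gale dual $\hat B$ is $B$ enlarged by opposite pairs; since every coefficient of the Horn series $\phi$ is a nonzero product of factorials, $\phi$ is not a polynomial, so $F_\phi$ is stable. Theorem~\ref{th:conjclassic} then forces $B$ to be either a Lawrence configuration --- in which case, after reordering, $b_i+b_{r+i}=0$ for $i=1,\dots,r$ and we set $s_1=s_2=0$ --- or a Cayley essential configuration with $n=2r+3$, the vectors $b_1,\dots,b_{2r}$ falling into $r$ opposite pairs and $b_{2r+1}=s_1\nu_1$, $b_{2r+2}=s_2\nu_2$, $b_{2r+3}=-b_{2r+1}-b_{2r+2}$. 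Since here $\CC$ is the first quadrant, $\nu_1=e_1$ and $\nu_2=e_2$, so $b_{2r+1}=(s_1,0)$, $b_{2r+2}=(0,s_2)$, $b_{2r+3}=(-s_1,-s_2)$ with $s_1,s_2>0$, as claimed.

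The second ingredient is the identity $\partial F/\partial z_j = z^{v-e_j}(\langle b_j,\theta\rangle+v_j)f$, valid whenever $F(z)=z^v f(z^{\nu_1},z^{\nu_2})$ is $A$-hypergeometric: a product of derivatives $\partial/\partial z_j$ applied to $F_\phi$ corresponds, in the coordinates $x$, to an operator of the form~\eqref{eq:P} applied to $\phi$, and likewise for the functions $f_{(s_1,s_2)}$, which by Proposition~\ref{prop:biv-rational} are dehomogenizations of toric residues. Because $F_\phi$ is stable, differentiating sufficiently many times in every variable produces a nonzero $A$-hypergeometric function whose degree lies in the Euler--Jacobi cone $\EE$; by Theorem~\ref{th:dim1} in the Cayley case (and \cite[Theorem~1.1]{binom} in the Lawrence case), such a function is a scalar multiple of a toric residue, and the configuration stays Cayley essential (resp. Lawrence) since adjoining opposite pairs to $\hat B$ only adds binomial factors. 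Performing the analogous differentiation on the appropriate $f_{(s_1,s_2)}$, the theorem reduces to showing that the coefficient sequences of these two residues become proportional after multiplying the coefficient of each $x^m$ by a product of affine forms $\langle b_j,m\rangle+c$, for a suitable choice of signs $f_{(s_1,s_2)}(\pm x_1,\pm x_2)$.

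This proportionality I would prove by a direct factorial manipulation. Fix a full-dimensional subcone of $\N^2$ on which every $\ell_i$ has constant sign (finitely many lines $\ell_i=0$ cannot cover the quadrant), so that there the coefficient $c_m$ of $\phi$ is a single, non-piecewise product of factorials. Each opposite pair $\{b,-b\}$ among $b_1,\dots,b_{2r}$ contributes, up to a sign of the shape $\epsilon_1^{m_1}\epsilon_2^{m_2}$ times a constant, a quotient of one shifted factorial of $\langle b,m\rangle$ by another, hence a ratio of products of affine forms $\langle b,m\rangle+c$; in the Cayley case the three forms $\ell_{2r+1},\ell_{2r+2},\ell_{2r+3}$ contribute, again up to such a sign, the quotient $(s_1m_1+s_2m_2-k_{2r+3}-1)!/\big((s_1m_1+k_{2r+1})!\,(s_2m_2+k_{2r+2})!\big)$, which differs from the coefficient $(s_1m_1+s_2m_2)!/\big((s_1m_1)!\,(s_2m_2)!\big)$ of $f_{(s_1,s_2)}$ by a ratio of products of shifted copies of $s_1m_1=\langle b_{2r+1},m\rangle$, $s_2m_2=\langle b_{2r+2},m\rangle$ and $s_1m_1+s_2m_2=-\langle b_{2r+3},m\rangle$. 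Collecting all these contributions and choosing $\epsilon_1,\epsilon_2\in\{\pm1\}$ so that $\epsilon_1^{m_1}\epsilon_2^{m_2}$ reproduces the $m$-dependent part of the overall sign of $c_m$ on the subcone --- which is the character $(-1)^{\langle w,m\rangle}$ with $w=\sum_{\ell_i(m)<0}b_i\in\Z^2$ --- yields the desired identity of coefficients there, and hence explicit $P_1,P_2$. I expect the main obstacle to be twofold: the bookkeeping of the signs and constants coming from the $k_i$ (and verifying that the leftover overall constant is harmless), and, more seriously, upgrading an identity valid on one region of $\N^2$ to a global identity of power series. For the latter I would observe that $P_1(\theta)\phi$ and $P_2(\theta)f_{(s_1,s_2)}(\pm x_1,\pm x_2)$ are Taylor expansions of rational functions whose coefficients still satisfy a Horn recurrence (multiplying a coefficient sequence by a polynomial in $m$ preserves this property, cf. Remark~\ref{rem:Horn}); the ratios of consecutive coefficients are then rational functions of $m$ agreeing on a Zariski-dense set, hence identical, so both sequences obey the same recurrence with the same values on the subcone and therefore coincide, which gives $P_1(\theta)\,\phi(x)=P_2(\theta)\,f_{(s_1,s_2)}(\pm x_1,\pm x_2)$.
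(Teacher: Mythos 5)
Your overall strategy is the same as the paper's: invoke Theorem~\ref{th:conjclassic} to force $B$ to be Lawrence or Cayley essential with $b_{2r+1}=(s_1,0)$, $b_{2r+2}=(0,s_2)$, $b_{2r+3}=(-s_1,-s_2)$, and then match Taylor coefficients directly, letting $P_1$ absorb the affine-form factors that appear in denominators and $P_2$ those that appear in numerators together with $\prod_{j=1}^k(s_1\theta_1+s_2\theta_2+j)$. The middle portion of your argument --- stability, passage to the Euler--Jacobi cone, Theorem~\ref{th:dim1} and the identification with toric residues --- is superfluous: you end up abandoning it in favour of the factorial manipulation, which is exactly what the paper does in one stroke without any residue input, so you could delete that paragraph entirely.

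The one place where you go beyond the paper is also where your argument has a genuine hole. You prove the coefficient identity only on a full-dimensional subcone of $\N^2$ where every $\ell_i$ has constant sign, and then propose to globalize by noting that both sides satisfy the same Horn recurrence $c_{m+e_j}=R_j(m)\,c_m$ and agree on a Zariski-dense set. But a two-term recurrence with rational-function ratio does \emph{not} propagate across the lattice points where the numerator or denominator of $R_j$ vanishes: writing $R_j=N_j/Q_j$, the relation $Q_j(m)c_{m+e_j}=N_j(m)c_m$ determines nothing about $c_{m+e_j}$ when $Q_j(m)=0$, so agreement on a subcone plus identical recurrences does not force agreement on the finitely many exceptional lines (and the piecewise formula for the Horn coefficients really does change its shape across the lines $\ell_i=0$, precisely where these degenerations occur). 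To close this you must use the rationality hypothesis again --- in effect, that for a rational Horn series supported on all of $\N^2$ the first quadrant is a single minimal region of the associated arrangement, so the sign pattern of the $\ell_i$ is constant on $\N^2$ and the single-formula computation is already global. This is what the paper's proof implicitly relies on when it writes $P_1(\theta)\phi$ as one sum over all of $\N^2$; as it stands, your recurrence argument does not substitute for it.
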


\begin{proof}
It follows from Theorem~\ref{th:conjclassic} that $B$ must be either a Lawrence or a Cayley essential configuration.  In the latter case, we 
have moreover that $n=2r+3$, $b_1,\dots,b_{2r}$ are as in 
\eqref{eq:case1} while
$b_{2r+1} = (s_1,0)$, $b_{2r+2} = (0,s_2)$, $b_{2r+3} = (-s_1,-s_2)$ for
$s_1,s_2$ positive integers.  Therefore,  we can find
a differential operator $P_1(\theta)$ as in \eqref{eq:P} such that
$$P_1(\theta)\cdot \phi(x) = \pm \sum_{m\in \N^2} \left(\prod_{i=1}^r\prod_{j=c_i}^{d_i} (\langle b_i ,m\rangle + j) \right) \frac {(s_1m_1 + s_2m_2 +k)!}{(s_1m_1)!(s_2m_2)!} (\pm x_1)^{m_1} (\pm x_2)^{m_2},$$
for suitable integers $c_i,d_i$.  Thus taking
$$P_2(\theta) \,=\, \pm\left(\prod_{i=1}^r\prod_{j=c_i}^{d_i} (\langle b_i ,\theta\rangle + j) \right)\, \prod_{j=1}^k (s_1\theta_1 + s_2\theta_2 +j),$$
we get
$$P_1(\theta)(\phi(x)) = P_2(\theta)(f_{(s_1,s_2)}(\pm x_1, \pm x_2)).$$
The argument in the Lawrence case is completely analogous.
\end{proof}

\begin{example}
We return to the rational function in  Example~\ref{ex:gessell}:
$$\phi(x) \, = \, \frac{1-x_1x_2}{1+x_1x_2^2-3x_1x_2+x_1^2x_2} ,$$
and its Laurent expansion \eqref{eq:varphi}.  Let $m_1'=2m_1-m_2,
m_2'=2m_2-m_1$ (so that $m_1 = \frac{2m_1'+m_2'}3, m_2 =
\frac{m_1'+2m_2'}3$) then
\[\phi(x) \, = \, \sum_{{{(m_1',m_2')\in L\cap\N^2}}} \frac{ (m_1'+
  m_2')!}{m_1'!m_2'!}\ u_1^{m_1'} u_2^{m_2'},
\]
where $L$ denotes the sublattice $L =\Z (1,2) + \Z (2,1) =
\{(m_1',m_2')\in \Z^2 : m_1'\equiv m_2' \bmod 3\}$ and
${{u_1^3=x^2y,u_2^3=xy^2}}$.  Thus, we get an expansion similar to
that of $f_{(1,1)}$ but the sum is only over the points in the first
quadrant that lie in the sublattice $L$ of index $3$ rather than all
of $\N^2$.

\end{example}

\end{document}